\documentclass[letterpaper, 10 pt, conference]{ieeeconf}  

\IEEEoverridecommandlockouts                              

\usepackage{amsmath, amssymb, amscd, amsthm, amsfonts}
\usepackage{graphicx}
\usepackage{hyperref}
\usepackage{tikz}
\usepackage{blkarray}
\usepackage{accents}
\usepackage{algorithmic}
\usepackage{booktabs}
\usepackage[linesnumbered,ruled,vlined]{algorithm2e}
\usepackage[normalem]{ulem}
\usepackage{dsfont}
\usepackage{caption}
\usepackage{subcaption}

\usepackage{amsmath}
\DeclareMathOperator*{\argmin}{argmin}

\usetikzlibrary{arrows.meta,positioning}

\def\sS{\mathsf{S}}

\def\sI{\mathsf{I}}
\def\sR{\mathsf{R}}

\def\blambda{\boldsymbol{\lambda}}

\newcommand\bp{\boldsymbol{p}}
\newcommand\bu{\boldsymbol{u}}
\newcommand\btheta{\boldsymbol{\theta}}
\newcommand\bpsi{\boldsymbol{\psi}}
\newcommand\bvarphi{\boldsymbol{\varphi}}

\newtheorem{theorem}{Theorem}[section]

\newtheorem{definition}[theorem]{Definition}
\newtheorem{lemma}[theorem]{Lemma}
\newtheorem{proposition}[theorem]{Proposition}

\let\oldsout\sout
\renewcommand{\sout}[1]{\textcolor{red}{\oldsout{#1}}}

\title{\LARGE \bf
Epidemic Control using Stackelberg Mean Field Game \\ with Budget Constraint
}

\author{Huaning Liu and G\"ok{\c c}e Dayan{\i}kl{\i}
\thanks{This work is supported by National Science Foundation under grant DMS-2436332.}
\thanks{Both authors are with the Department of Statistics,
  University of Illinois Urbana-Champaign, 
  Champaign, IL 61820, USA. {\tt\small huaning3@illinois.edu}, 
        {\tt\small gokced@illinois.edu}}
}

\begin{document}

\maketitle
\thispagestyle{empty}
\pagestyle{empty}

\begin{abstract}
Epidemic mitigation comes with a price in practice, and when budget is limited, its effective allocation becomes an important question on public policy. In this paper, we study epidemic control through a Stackelberg mean-field game in which a principal, representing the government, chooses social distancing guidelines and vaccination levels for a large population of rational minor agents subject to a budget-spending process. The principal anticipates the minor agents mean-field Nash equilibrium (MFNE) and selects intervention policies to minimize her own objective, inducing a bi-level optimal control problem. We define the Stackelberg mean-field game equilibrium (SMFE) and give its necessary condition by a forward-backward ordinary differential equation (FBODE) system, and provide existence and uniqueness results. We further propose an iterative fixed-point algorithm to solve the FBODE numerically and present computational observations that illustrate the resulting equilibrium behavior.
\end{abstract}

\section{Introduction}
Since the COVID-19 pandemic, regulators have experimented with different mitigation policies, from strict lockdowns to light regulations. These choices produced a wide range of public-health and economic outcomes, revealing the trade-off that stronger interventions may suppress transmission more effectively, but often at substantial social and economic costs~\cite{horrible2022}. Therefore, one of the main aims of epidemic control is to find the optimal incentives for people to decrease the spread of the epidemic while balancing out these different outcomes~\cite{elie2020contact,aurell2022optimal}.

Control literature that studied this question has been built based on the classical susceptible-infected-recovered (SIR) model and its variants, where a centralized planner selects intervention intensities to minimize an aggregate social cost. Recent extensions incorporated additional constraints and objectives, such as limited ICU availability via a constraint on infection levels~\cite{Rossa_2024}, minimizing the time required to reduce infections below a target threshold~\cite{Bolzoni_2017}, or budget constraints in response to an immediate epidemic outbreak~\cite{Salcedo2020}. A common feature of these models is that the population is assumed to comply perfectly with the imposed policy.

A line of work complementary to this feature studies human response in epidemics modeling through game theory, where individuals choose actions such as socialization or vaccination to optimize their own objectives for exogenously given policy of the regulator. In large populations, these strategic interactions are naturally modeled by mean-field games (MFGs). Following the seminal works in~\cite{Lasry_2007, Huang_2006}, the MFG framework enables mean field approximations as the population size approaches infinity and reduces the Nash equilibrium characterization to the analysis of a coupled forward-backward differential equations system by assuming identical and infinitesimal agents. We refer the readers to~\cite{cho2020mean,elie2020contact,aurell2022finite,Doncel_Gast_Gaujal_2022,olmez2022modeling,pnas_epidemics,liu2025incorporatingauthorityperceptioneconomic} for more works on epidemic modeling under large populations. However, none of these works have the optimal policy-making by modeling the regulator as a decision-maker optimizing some objectives.

To model the hierarchical structure where a regulator would like to choose an optimal policy while anticipating individual responses from the population, we need to move beyond standard MFG and consider Stackelberg MFG formulation, whose theoretical foundations are initially studied in~\cite{Elie2018ATail,carmonawang2021finite}. In this paper, we study epidemic mitigation as a Stackelberg MFG with an explicit state dynamics for the regulator which represents their budget process. The principal chooses time-dependent social distancing guidelines and vaccination controls to minimize a budget-aware objective, where minor agents respond with the MFG equilibrium where the dynamics are inspired from SIR model.

Our contributions are three-fold: we formulate the first continuous-time Stackelberg MFG \textit{with principal state process} with a motivation to model epidemic mitigation problem; we give necessary conditions for the Stackelberg mean-field equilibrium by a forward-backward ordinary differential equation (FBODE) system and show existence and uniqueness results for the FBODE system. Finally, we develop a numerical algorithm to solve the system and study the equilibrium behavior.

We highlight~\cite{aurell2022optimal} as a related prior study on the application of Stackelberg MFG to epidemic control. Our work differs from that paper in two main respects. First, we incorporate the budget dynamics as an explicit state process for the principal and add vaccination as her another decision variable, which allows us to study the allocation of resources between social distancing and vaccination. Second,~\cite{aurell2022optimal} relies on Sannikov’s approach~\cite{Sannikov2008AContinuous} for the characterization of Stackelberg equilibrium; whereas our method is inspired by the approach of~\cite{Bensoussan_2015} and gives the characterization from an optimal control perspective, treating the minor agents' value function and density flow as part of the principal’s state variables.

The rest of the paper is organized as follows. In Section~\ref{sec:sec_model_formulation}, we introduce a general epidemic Stackelberg MFG with principal state dynamics and review existing results on the Nash equilibrium of the minor agent MFG to ensure the well-posedness of the Stackelberg problem. The main theoretical results on the Stackelberg MFG equilibrium are presented in Section~\ref{sec:smfe_charac}. Numerical algorithms and experiments are discussed in Section~\ref{sec:simulation_study}. Finally, Section~\ref{sec:future_dir} concludes with a discussion of future directions.

\section{Model Formulation}
\label{sec:sec_model_formulation}
In our model, the individuals in the population are the minor agents and the government is in the regulator (i.e., principal) role. Individuals will respond to the principal with a Nash equilibrium that will be approximated via MFG formulation and the principal will optimize the policies by taking into account the Nash equilibrium response of the individuals. We first start with stating the MFG model of the minor agents and then introduce the Stackelberg MFG model with the inclusion of the principal. 
\subsection{Minor Agent's Game}
We first introduce and analyze epidemic MFG over a finite time horizon $T>0$ given the incentives or policies of the government (i.e., principal). As common in MFG models, we can directly focus on the model of the representative agent and her interactions with the population via the mean field. At time $t \in[0, T]$, a representative agent is in a health state $X_t \in E:=\{\sS, \sI, \sR\}$ which represents \textit{susceptible}, \textit{infected}, and \textit{recovered}. She chooses a state-dependent socialization level $\alpha_t(e) \in[0,1]$ for each $e \in E$. 
The controlled health state process $(X_t)_{t \in [0,T]}$ follows a continuous-time Markov chain, and an agent’s transition rate from state $\sS$ to $\sI$ depends both on her control and on the mean field interactions through the average socialization level of infected population $Z_t$. Namely, if we denote the population joint control-state distribution as $\rho_t$, then we have $Z_t = \int_A a \rho_t(d a, \sI)$. Vaccination transitions an individual from state $\sS$ to state $\sR$ at a rate $v_t$ and will be controlled by the principal and when analyzing the MFG for minor agents, it can be assumed to be given exogenously. We denote the base transmission rate as $\beta > 0$ and further assume that the transition from state $\sI$ to $\sR$ (recovery) and the transition from state $\sR$ to $\sS$ (waning of immunity) happens after exponentially distributed times, with constant rates $\gamma > 0$ and $\eta \geq 0$, respectively. The state process of representative agent is summarized in Figure~\ref{fig:state_diagram} and its transition-rate matrix thereby writes
$$Q(t,\alpha,Z)=\left(\begin{array}{ccc} \cdots & \beta \alpha_t(\sS) Z_t & v_t \\ 0 & \cdots & \gamma \\ \eta & 0 & \cdots\end{array}\right).$$

The notation $\cdots$ represents the negative of the sum of
the elements on the same row to satisfy the condition of
having the row sum equal to 0. 
\begin{figure}[h]
  \centering
\begin{tikzpicture}[
  >=Stealth,
  node distance=2.2cm,
  state/.style={
    draw,
    rounded corners=6pt,
    minimum width=11mm,
    minimum height=9mm,
    inner sep=2pt,
    font=\large\bfseries
  }
]

\node[state, fill=blue!35] (S) {$\sS$};
\node[state, fill=red!35, right=of S] (I) {$\sI$};
\node[state, fill=olive!45, right=of I] (R) {$\sR$};

\draw[->, thick]
  (S.north east) to[out=25, in=155, looseness=0.85]
  node[midway, above] {$v_t$}
  (R.north west);

\draw[->, thick] (S) -- (I)
  node[midway, above] {$\beta\alpha_tZ_t$};

\draw[->, thick] (I) -- (R)
  node[midway, above] {$\gamma$};

\draw[->, thick]
  (R.south west) to[out=-155, in=-25, looseness=0.85]
  node[midway, above] {$\eta$}
  (S.south east);
\end{tikzpicture}
\caption{State flow diagram of the representative minor agent}
\label{fig:state_diagram}
\end{figure}
The representative agent would like to minimize the following cost functional by choosing her socialization level $\alpha = (\alpha_t)_{t \in [0,T]}$, given the mean field interactions $Z = (Z_t)_{t \in [0,T]}$:
{\small
\begin{equation}
\label{eq:minor_cost}
\begin{aligned}
J\big(\alpha;&Z\big)
= \mathbb{E}\Big[
\int_{0}^{T} \Big[
\big(\lambda_t^{\sS}-\alpha_t\big)^2\mathds{1}_{\sS}\big(X_t\big) \\
&+ \Big(\big(\lambda_t^{\sI}-\alpha_t\big)^2 + c_{\sI}\Big)\mathds{1}_{\sI}\big(X_t\big)
+ \big(\lambda_t^{\sR}-\alpha_t\big)^2 \mathds{1}_{\sR}\big(X_t\big)
\Big]\,dt
\Big].
\end{aligned}
\end{equation}
}
Here, $\mathds{1}_e\left(X_t\right)$ represents the indicator function for $e \in E$ that is equal to 1 if $X_t=e$ and 0 otherwise. A time-dependent social distancing guideline $\left(\lambda_t^e\right)_{e \in E, t \in[0, T]}$ is announced by the principal, and agents incur opportunity or penalty costs when they deviate from it. The parameter $c_{\sI}>0$ denotes the infection cost. In addition, we assume that the admissible controls of minor agents are closed-loop; that is, Markovian, square-integrable, and $[0,1]$-valued functions defined on $[0,T]\times E$. We thereby formulate an extended MFG for minor agents.

\begin{definition}
    The profile $(\hat{\alpha}, \hat{Z}) = (\hat{\alpha}_t, \hat{Z}_t)_{t \in [0,T]}$ is an MFG Nash equilibrium (MFNE) if for any admissible $\alpha$, we have
    $J(\hat{\alpha};\hat{Z}) \leq J(\alpha;\hat{Z}),$
    where $\hat{Z}_t = \int_A a \hat{\rho}_t(d a, \sI)$ and $\hat{\rho} = (\hat{\rho}_t)_{t \in [0,T]}$ is the action-state distribution law induced by $\hat{\alpha}$.
\end{definition}

Finite state MFGs have been extensively studied, and equilibrium characterization as well as existence and uniqueness results are established (see e.g.,~\cite{Gomes2013,carmonawang_prob_approach_extended}) for non-extended settings. In the epidemiological setting which requires extended finite-state MFGs, related formulations have been analyzed in~\cite{liu2025incorporatingauthorityperceptioneconomic} for a multi-population model and in~\cite{aurell2022finite} within a graphon game framework. To ensure that the Stackelberg formulation introduced below is well-posed, we briefly restate the key results needed. Denote the population state density flow as $(p_t(e))_{e \in E, t \in [0,T]}$, where $p_t \in \Delta_p := \{(p^\sS, p^\sI, p^\sR) \in[0,1]^3: p^\sS+p^\sI+p^\sR=1\}$. Let $\pi_0 \in \Delta_p$ be the \textit{known} initial state distribution. The value function of the representative agent is defined as
{\small $$\begin{aligned}
u_t(e)
&:=\inf_{(\alpha_\tau)_{\tau\in[t,T]}}
\mathbb{E}\Big[
\int_t^T \Big[
\big(\lambda_\tau^{\sS}-\alpha_\tau\big)^2\mathbf{1}_{\sS}(X_\tau)\\
&\hspace{-0.8cm}+\big((\lambda_\tau^{\sI}-\alpha_\tau)^2+c_{\sI}\big)\mathbf{1}_{\sI}(X_\tau)
+\big(\lambda_\tau^{\sR}-\alpha_\tau\big)^2\mathbf{1}_{\sR}(X_\tau)
\Big]d\tau
\Bigm| X_t=e
\Big].
\end{aligned}$$}
We first present a forward-backward ordinary differential equations (FBODE) system that characterizes the MFNE.
\begin{proposition}
\label{prop:minor_charac}
The MFNE control profile is described
{\small
\begin{equation*}
\label{eq:minor_control}
\begin{aligned}
\hat{\alpha}_t(\sS)&=\lambda_t^{\sS}+\frac{\beta Z_t\big(u_t(\sS)-u_t(\sI)\big)}{2},\\
\hat{\alpha}_t(\sI)&=\lambda_t^{\sI},
\qquad\hat{\alpha}_t(\sR)=\lambda_t^{\sR},
\end{aligned}
\end{equation*}}
where $(u,p)$ satisfy the coupled system:
{\small
\begin{align}
\dot p_t(\sS)&=-\beta\hat{\alpha}_t(\sS)Z_tp_t(\sS)-v_tp_t(\sS)+\eta p_t(\sR),\nonumber\\
\dot p_t(\sI)&=\beta\hat{\alpha}_t(\sS)Z_tp_t(\sS)-\gamma p_t(\sI),\nonumber\\
\dot p_t(\sR)&=\gamma p_t(\sI)+v_tp_t(\sS)-\eta p_t(\sR),\label{eq:minor_KFP}
\end{align}}
and
{\small
\begin{align}
\dot u_t(\sS)&=\beta\hat{\alpha}_t(\sS)Z_t\big(u_t(\sS)-u_t(\sI)\big)+v_t\big(u_t(\sS)-u_t(\sR)\big)\nonumber\\
&\hskip5mm-\big(\lambda_t^{\sS}-\hat{\alpha}_t(\sS)\big)^2\nonumber\\
\dot u_t(\sI)&=\gamma\big(u_t(\sI)-u_t(\sR)\big)-c_{\sI},\nonumber\\
\dot u_t(\sR)&=\eta\big(u_t(\sR)-u_t(\sS)\big),\label{eq:minor_HJB}
\end{align}}
with terminal and initial conditions
{\small
\begin{equation*}
u_T(e)=0,\qquad p_0(e)=\pi_0^e,\qquad \forall e\in\{\sS,\sI,\sR\},
\end{equation*}}
and the consistency condition $Z_t=\hat{\alpha}_t(\sI)p_t(\sI), \forall t\in[0,T]$.
\end{proposition}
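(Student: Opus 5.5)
The proof follows the standard verification route for finite-state MFGs. We freeze the mean-field flow $Z$, solve the representative agent's control problem through its Hamilton--Jacobi--Bellman (HJB) equation, and then impose the fixed-point (consistency) condition.

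\textbf{Step 1: HJB for a frozen flow.} Fix a measurable $Z=(Z_t)_{t\in[0,T]}$ with values in $[0,1]$, together with $\lambda$ and $v$. The value function $u_t(e)$ solves the backward ODE system
\[
-\dot u_t(e)=\min_{a\in[0,1]}\Big\{f_e(t,a)+\sum_{e'\neq e}Q_{ee'}(t,a,Z)\big(u_t(e')-u_t(e)\big)\Big\}, \qquad u_T(e)=0.
\]
Here $f_e$ is the running cost in \eqref{eq:minor_cost}. This is derived either by dynamic programming or by a verification argument: for any admissible $\alpha$, apply Dynkin's formula to $u_t(X_t)$ to get $J(\alpha;Z)\ge \mathbb{E}[u_0(X_0)]$, with equality for the minimizing feedback.

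The minimizations are explicit because only the $\sS\to\sI$ rate depends on the agent's action.
\begin{itemize}
\item In states $\sI$ and $\sR$ the jump rates do not involve $a$, so the minimizer of $(\lambda^e_t-a)^2$ is $a=\lambda^e_t$. This gives the stated $\hat\alpha_t(\sI)$ and $\hat\alpha_t(\sR)$, and the equations for $\dot u(\sI)$ and $\dot u(\sR)$.
\item In state $\sS$ we minimize $(\lambda^\sS_t-a)^2+\beta a Z_t\big(u_t(\sI)-u_t(\sS)\big)$. This is strictly convex in $a$, and the first-order condition yields $\hat\alpha_t(\sS)=\lambda^\sS_t+\beta Z_t\big(u_t(\sS)-u_t(\sI)\big)/2$.
\end{itemize}
Substituting these minimizers back gives exactly \eqref{eq:minor_HJB}, after rewriting $-\dot u$ with the sign conventions used there.

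\textbf{Step 2: Forward equation.} Under the feedback $\hat\alpha$, the law $p_t$ of $X_t$ satisfies the Kolmogorov forward equation $\dot p_t=p_tQ(t,\hat\alpha,Z)$ with $p_0=\pi_0$. Written componentwise, this is \eqref{eq:minor_KFP}.

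\textbf{Step 3: Consistency.} At equilibrium, the joint action-state law $\hat\rho_t(da,\sI)$ is the image of $p_t(\sI)$ under the deterministic feedback $a=\hat\alpha_t(\sI)$. Hence
\[
\hat Z_t=\int a\,\hat\rho_t(da,\sI)=\hat\alpha_t(\sI)\,p_t(\sI).
\]
Combining the three steps: $(\hat\alpha,\hat Z)$ is an MFNE exactly when $\hat\alpha$ is optimal against $\hat Z$ (Step 1) and $\hat Z$ is generated by $\hat\alpha$ (Steps 2 and 3). This is the coupled FBODE together with the consistency condition.

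\textbf{Main obstacle: the control constraint $a\in[0,1]$.} The unconstrained minimizer in state $\sS$ must actually lie in $[0,1]$; otherwise the true optimizer is its projection onto $[0,1]$. I would handle this in two parts.
\begin{enumerate}
\item \emph{A priori bounds on $u$.} Comparison for the linear ODEs shows $u\ge 0$. A cost-based argument bounds the difference $u_t(\sS)-u_t(\sI)$: the $\sI$ state carries the cost $c_\sI$ and the agent cannot avoid infection costs more cheaply than the alternatives, so I expect $u_t(\sI)\ge u_t(\sS)$ up to a bound of order $c_\sI T$.
\item \emph{Using the bounds.} Since $Z_t\in[0,1]$ as well, it suffices either to assume $\lambda$ is valued so that the formula stays in $[0,1]$, or to interpret the formula with a projection, as is implicitly done in \cite{aurell2022finite,liu2025incorporatingauthorityperceptioneconomic}.
\end{enumerate}
I would cite those works for the general characterization and verify only the explicit minimizers above. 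Finally, the ODE solutions are well-defined on $[0,T]$ because all coefficients are bounded measurable and $p$ stays in $\Delta_p$.
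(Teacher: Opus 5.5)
Your proposal is correct and follows the same route as the paper. The paper's proof just invokes the HJB equation from dynamic programming for a frozen flow $Z$, the Kolmogorov forward equation under the optimal feedback, and the consistency relation $Z_t=\hat\alpha_t(\sI)p_t(\sI)$, deferring the details to the cited work. Your caveat about the constraint $a\in[0,1]$ is a fair point that the paper does not address: the stated $\hat\alpha_t(\sS)$ is the unconstrained minimizer and can leave $[0,1]$, for example when $\lambda^\sS_t=0$, $Z_t>0$ and $u_t(\sS)<u_t(\sI)$. It therefore has to be read with a projection onto $[0,1]$, or under a restriction on $\lambda^\sS$.
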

\begin{proof}
    The proof relies on Hamilton-Jacobi-Bellman (HJB) equation induced from dynamic programming principle and Kolmogorov-Fokker-Planck (KFP) equation. Interested readers may refer to~\cite{liu2025incorporatingauthorityperceptioneconomic} for details in a generalized setup.
\end{proof}

\begin{proposition}
    \label{prop:minor_well_posedness}
    Under short time horizon condition, the FBODE system in~\ref{prop:minor_charac} admits a unique bounded solution.
\end{proposition}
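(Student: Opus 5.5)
We establish existence and uniqueness through a Banach fixed-point (contraction) argument on a small time interval. The key is that the forward--backward coupling enters only through the mean-field term $Z$, whose dependence on the data is Lipschitz once all state variables are known to lie in a fixed bounded set.

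\textbf{Step 1 (a priori bounds).} By the consistency condition and $\hat\alpha_t(\sI)=\lambda^{\sI}_t$, we have $Z_t=\lambda_t^{\sI}p_t(\sI)$. This reduces the system to the unknowns $(u,p)$, with $Z_t$ a linear function of $p_t$. We truncate the control by projecting $\hat\alpha_t(\sS)$ onto $[0,1]$. This is consistent with admissibility, since the minimizer of the Hamiltonian over $[0,1]$ is the projection of the unconstrained minimizer. The projection is $1$-Lipschitz.

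With $\hat\alpha\in[0,1]$ and $v,\eta,\gamma\ge 0$, the KFP system keeps $p_t\in\Delta_p$. Indeed, it is the forward equation of a Markov chain with nonnegative rates, so mass is preserved and the off-diagonal inflows are nonnegative. Hence $|Z_t|\le \sup|\lambda^{\sI}|$.

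For $u$ we use the value-function interpretation. Take any fixed admissible control, say $\alpha\equiv\lambda$ clipped to $[0,1]$. This gives $0\le u_t(e)\le T(c_{\sI}+C_\lambda)$ with $C_\lambda$ depending only on $\sup|\lambda|$. So $u$ stays in a box $B_T$ whose size shrinks with $T$.

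\textbf{Step 2 (fixed-point map).} Let $\mathcal{X}=C([0,T];\Delta_p)$ with the sup norm. Given $p\in\mathcal{X}$, set $Z=\lambda^{\sI}p(\sI)$. Solve the backward HJB (\ref{eq:minor_HJB}) with the projected control. Its right-hand side is locally Lipschitz in $u$ and, by Step 1, confined to the bounded set $B_T$, so it has a unique solution $u=\Phi_1(p)$. Then solve the forward KFP (\ref{eq:minor_KFP}) with this control and the given $Z$. This is linear in the new unknown, giving $\Phi(p):=\Phi_2(\Phi_1(p),p)\in\mathcal{X}$. Fixed points of $\Phi$ correspond exactly to solutions of the FBODE in Proposition~\ref{prop:minor_charac}.

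\textbf{Step 3 (contraction, the main step).} Take $p,p'\in\mathcal{X}$ with corresponding $u,u'$, and write $\delta$ for differences.

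Since all quantities lie in fixed bounded sets, the right-hand side of the HJB is Lipschitz in $(u,Z)$ with a constant $L$. Gronwall, applied backward from $u_T=u'_T=0$, gives $\|\delta u\|_\infty\le C_1 T\|\delta p\|_\infty$.

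Similarly $\|\delta\hat\alpha(\sS)\|_\infty\le C_2(\|\delta Z\|+\|\delta u\|)$. Feeding this into the KFP with common initial condition $\pi_0$, forward Gronwall gives $\|\delta\Phi(p)\|_\infty\le C_3 T e^{C_4T}\|\delta p\|_\infty$.

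The constants $C_i$ depend only on $\beta,\gamma,\eta,c_{\sI},\sup|\lambda|,\sup|v|$, and an upper bound for $T$. Therefore, for $T$ small enough, $C_3Te^{C_4T}<1$. Banach's theorem then gives a unique fixed point, hence a unique bounded solution.

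The obstacle is bookkeeping: we must ensure that the Lipschitz constants do not themselves depend on the solution. This is why the invariance of $\Delta_p$ and the bound on $u$ in Step 1 must be proved first, and why we use the projection onto $[0,1]$. Without the projection, the quadratic terms $\beta\hat\alpha Z(u(\sS)-u(\sI))$ and $(\lambda-\hat\alpha)^2$ would only be locally Lipschitz.

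Alternatively, if one prefers to avoid a separate projection argument, the generalized result in~\cite{liu2025incorporatingauthorityperceptioneconomic} can be cited directly once these bounds are verified.
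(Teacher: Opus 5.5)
Your overall strategy is the same Banach fixed-point route the paper takes: reduce to $(u,p)$ via $Z_t=\lambda_t^{\sI}p_t(\sI)$, secure a priori bounds, then use backward and forward Gr\"onwall to get a contraction for small $T$. The paper's appendix, Step~1, carries out exactly these estimates.

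The gap is the projection. Replacing $\hat\alpha_t(\sS)$ by its projection onto $[0,1]$ gives a different FBODE from the one in Proposition~\ref{prop:minor_charac}. There, $\hat\alpha_t(\sS)=\lambda_t^{\sS}+\tfrac{\beta}{2}Z_t\big(u_t(\sS)-u_t(\sI)\big)$ is unconstrained, and the paper relies on this unprojected formula later: the adjoint equations in the SMFE theorem come from differentiating it.

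Your claim that fixed points of $\Phi$ correspond exactly to solutions of the stated FBODE fails whenever the projection is active, and it is active for admissible data. Take $\lambda^{\sS}\equiv 0$, $v\equiv 0$, $\lambda^{\sI}>0$ and $\pi_0^{\sI}>0$. The stated $\sS$-equation becomes $\dot u_t(\sS)=\tfrac{\beta^2Z_t^2}{4}\big(u_t(\sS)-u_t(\sI)\big)^2\ge 0$, so $u_t(\sS)\le 0$. Meanwhile $u_t(\sI)=c_{\sI}(T-t)+O((T-t)^2)>0$ near $T$. Hence the unconstrained $\hat\alpha_t(\sS)=\tfrac{\beta}{2}Z_t\big(u_t(\sS)-u_t(\sI)\big)<0$, and then $u_t(\sS)<0$ strictly. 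This contradicts your Step~1 bound $u\ge 0$: the value-function interpretation holds only for the projected HJB. So your existence result does not transfer, since the projected fixed point need not solve the stated system. Your uniqueness result does not transfer either, since it says nothing about stated-system solutions with $\hat\alpha\notin[0,1]$.

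The projection is also unnecessary. Local Lipschitz continuity suffices once you have uniform a priori bounds, and the short horizon supplies them.
\begin{itemize}
\item[(i)] Insert $Z=\lambda^{\sI}p(\sI)$ self-consistently in the KFP. The $\sI$-equation then reads $\dot p_t(\sI)=p_t(\sI)\big(\beta\hat\alpha_t(\sS)\lambda_t^{\sI}p_t(\sS)-\gamma\big)$, and the other two equations have nonnegative cross-inflows. So $\Delta_p$ is invariant for any bounded, even negative, $\hat\alpha$, and therefore $Z\in[0,1]$.
\item[(ii)] With $Z\in[0,1]$, the stated HJB consists of the two linear $\sI,\sR$ equations plus $\dot u_t(\sS)=\beta\lambda_t^{\sS}Z_t\delta_t+\tfrac{\beta^2Z_t^2}{4}\delta_t^2+v_t\big(u_t(\sS)-u_t(\sR)\big)$, where $\delta_t:=u_t(\sS)-u_t(\sI)$. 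Comparison with a scalar Riccati equation shows that every solution stays in a ball of radius $R$ for $T\le T_0$, with $R$ and $T_0$ depending only on the model constants.
\end{itemize}
On $\Delta_p\times\{\|u\|_\infty\le R\}$ all coefficients are uniformly Lipschitz, so your Gr\"onwall contraction goes through unchanged. You must then let $\Phi$ act on a ball of $C([0,T];\mathbb{R}^3)$, or recompute $Z$ from the new unknown in the KFP step. With $Z$ frozen from the input and $\hat\alpha<0$, the output can leave $\Delta_p$.

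Alternatively, keep the projection. In that case you must say explicitly that you are proving well-posedness of the correctly constrained MFNE system, not of the FBODE as written.
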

\begin{proof}
    The existence and uniqueness can be established based on Banach fixed point theorem following similarly to the result introduced in~\cite{liu2025incorporatingauthorityperceptioneconomic}.
\end{proof}
We remark that, in much of the Stackelberg game literature addressing more general settings (e.g.,~\cite{Elie2018ATail,Bensoussan_2015,aurell2022optimal}), uniqueness of the followers' equilibrium is stated as an assumption directly to define the admissible controls of the principal. In our setting, we explicitly establish this property to ensure that the principal’s problem is indeed well-posed. Next, we formulate the principal's problem.

\subsection{Stackelberg Mean-Field Game with State Processes}
As described above, minor agents take the time-dependent social distancing guideline $(\lambda_t^e)_{t\in[0,T],e\in E}$ and the vaccination rate $(v_t)_{t\in[0,T]}$ exogenously while finding their Nash equilibrium. We now introduce in the model a principal who selects these two policy profiles to optimize her own objective subject to some budget dynamics. This yields a bi-level Stackelberg MFG with additional state dynamics for the principal.

We consider open-loop control for the principal: The information available to her is restricted to the initial condition and model primitives, without feedback from the population state evolution, and her controls are therefore adapted to the filtration generated at time $0$. We denote the principal's policy as $\theta := (\theta_t)_{t \in [0,T]}$ with $\theta_t := ((\lambda_t^e)_{e \in E},v_t)$.
\begin{definition}
    A principal policy $\theta$ is \textit{admissible} if $T$ satisfies the short time horizon condition in Proposition~\ref{prop:minor_well_posedness}, i.e. each policy uniquely induces a MFNE.
\end{definition}
The condition in Proposition~\ref{prop:minor_well_posedness} requires uniform upper bounds on $\lambda$ and $v$. Accordingly, we restrict $\lambda_t$ to take values in $[0,1]$, and $v_t$ to lie in $[0,V]$ for some $V > 0$. 
Under these bounds, the principal's admissible policy space is well defined for a fixed short enough time horizon, and we define it as
$$\Theta:=\big\{\theta :[0, T] \rightarrow [0,1]^3 \times [0, V]: \theta \text { measurable} \big\}.$$
The principal is also subject to a budget state process $B := (B_t)_{t \in [0,T]}$, whose dynamics is given by
\begin{equation}
    \label{eq:budget_sde}
    dB_t = b(t,\theta_t,p_t^{\theta})dt, \quad B_0 = b_0.
\end{equation}
Here $p_t^{\theta}$ is the (unique) MFNE state density flow induced by the policy $\theta = (\lambda,v)$. The drift $b:[0,T] \times[0,1]^3 \times [0, V] \times \Delta_p \rightarrow \mathbb{R}$ represents the budget spending rate. The term $b_0 > 0$ is the \textit{known} total initial budget amount. The principal chooses her policy to minimize her own cost
{\small \begin{equation}
    \label{eq:principal_obj}
    \begin{aligned}
        J_{G}(\theta) & = \int_{0}^{T} \Big[\Phi_{p}(p_t^{\theta}(\sI)) + \Phi_{\lambda}(\lambda_t) + \Phi_{v}(v_t)\Big]dt + \Psi(B_T).
    \end{aligned}
\end{equation}}
Here, $\Phi_p:[0,1] \to [0,\infty)$, $\Phi_{\lambda}: [0,1]^3 \rightarrow [0,\infty)$, and $\Phi_{v}: [0,V] \rightarrow [0,\infty)$ are running cost functions representing penalization on high infection proportion in the population, and costs related to implemented social distancing and vaccination policies. We impose the budget constraint in a \textit{soft} form: the principal is allowed to run a budget deficit while mitigating the epidemic, but any negative terminal budget at time $T$ is penalized through a terminal cost. This penalty is captured by a function $\Psi: \mathbb{R} \rightarrow [0,\infty)$. We have now formulated the principal's problem and are at the stage of stating the equilibrium notion.
\begin{definition}
    \label{def:stackelberg_mfnash}
 We call a principal policy $\theta^* = (\lambda^*,v^*)$ a Stackelberg MFG equilibrium (SMFE) if $(\lambda^*,v^*) \in \argmin_{\Theta} J_{G}(\lambda,v)$.
\end{definition}

\section{Main Theoretical Results}
\label{sec:smfe_charac}
We begin with a remark on the difference of Nash and Stackelberg equilibria between a principal and a large population of agents. The former refers to a notion where both the principal and the minor-agent population choose strategies that are mutual best responses to each other. In contrast, a Stackelberg equilibrium is inherently hierarchical: the principal selects a policy to optimize her objective, taking into account the population (Nash) equilibrium dynamics induced by that policy. Therefore, our interest lies in a bi-level optimization formulation, which will be recast as an optimal control problem.

In order to give a characterization of the Stackelberg equilibrium, we will introduce an explicit form for the principal's model. We specify $\Phi_{p}(p_t(\sI)) := K^p p_t(\sI)^2$, $\Phi_\lambda(\lambda_t) := \sum_{e\in E}\frac12 K^e\bigl(\lambda_t^e-\bar\lambda^e\bigr)^2$, $\Phi_v(v_{t}) := \frac12 K^{v} (v_t - \bar{v})^2$ and $\Psi(B_T)
:= K^b \bigl(\min\{0,B_T\}\bigr)^2$. Here $K^p > 0$, $K^e > 0$, $K^v>0$ and $K^b>0$ are scaling factors for the corresponding cost items. Also, $\boldsymbol{\bar\lambda} = (\bar\lambda^e)_{e \in E}$ with $\bar\lambda^e > 0$ and $\bar{v} > 0$ denote the public health authority's (such as from C.D.C.) recommended levels of socialization and vaccination, respectively, issued to the government. Penalizing deviations of the government's policy from $\boldsymbol{\bar\lambda}$ and $\bar{v}$ captures the reputational cost associated with adopting policies that are perceived as either overly lenient or overly stringent. 
For vaccination, $\bar{v}$ may alternatively represent the vaccination level expected by vaccine suppliers, in which case deviations may signal policy intervention and distort long-run supply incentives. This cost functional formulation is motivated by the findings from the public health policy literature. In particular, existing studies highlight the importance of government trust~\cite{Han2021Trust}, the potential for stay-at-home orders to improve public favor towards the government~\cite{Nelson2020Gov}, and the role of external experts in fostering trust during crises~\cite{Broekema2018Crisis}.
To avoid double counting, we defer all financial interpretations to the budget dynamics.
The budget dynamics is specified as
\begin{equation}
\label{eq:budget_spec}
dB_t
=
\big[
\sum_{e\in E} - m^e(1 - \lambda_t^e) p_t(e)
- m^{v} v_t p_t(\sS)
\big]dt,
\end{equation}
where $m^e > 0$ and $m^v > 0$ are scaling parameters, $e \in E$. For social distancing, as suggested by the model formulation, the value $1$ corresponds to no regulation; hence, stricter social distancing policies weighted by the population size to regulate in the corresponding state contribute higher budget expenditure. Similarly, the principal's vaccination-related spending scales with both the vaccination level and the proportion of susceptible individuals.

We characterize the SMFE using an approach inspired by~\cite{Bensoussan_2015}. In~\cite{Bensoussan_2015}, a Stackelberg game with one principal and only \textit{one} follower is studied by treating the follower's forward state and backward adjoint processes, which characterize the follower's optimality, as the state variables of the principal. We extend this idea to the MFG setting with extended interactions and incorporate an additional state variable that represents the principal's budget. In particular, we regard the state-density flow and the value function of the representative minor player, together with the budget process, as the state variables in the principal's control problem, and analyze the principal optimality using optimal control techniques. Since the principal's admissible policy is open-loop in this model, the Pontryagin Maximum Principle applies. Moreover, thanks to the finite-state MFG formulation for minor agents, no variational derivative with respect to the state density is required. This enables us to derive a semi-explicit FBODE system that characterizes the SMFE. To save space and simplify the notations, we write the state as a superscript for $p$, $u$, and $\alpha$ in the following theorem.
\begin{theorem}
    {Suppose the control tuple
    $(\lambda_t^{\sS *},\lambda_t^{\sI *},
    \lambda_t^{\sR *},v_t^*)_{t\in[0,T]}$
    is an SMFE. Then it necessarily satisfies the following equations
    }
    \small
    \begin{equation}
        \label{eq:smfe_control}
        \begin{aligned}
K^\sS \bar{\lambda}^\sS-m^\sS p_t^\sS \chi_t &= K^\sS \textcolor{blue}{\lambda_t^{\sS *}} \\ & \hspace{-0.5cm}+\big[\beta p_t^\sI p_t^\sS(\varphi_t^\sI-\varphi_t^\sS)+\beta p_t^\sI(u_t^\sS-u_t^\sI) \psi_t^\sS\big]\textcolor{blue}{\lambda_t^{\sI *}}\\
K^\sI \bar{\lambda}^\sI-m^\sI p_t^\sI \chi_t & = \big[\beta p_t^\sI p_t^\sS(\varphi_t^\sI-\varphi_t^\sS)+\beta p_t^\sI(u_t^\sS-u_t^\sI) \psi_t^\sS\big]\textcolor{blue}{\lambda_t^{\sS *}} \\ & \hspace{-2.55cm} + \big[K^\sI + \beta^2 (p_t^\sI)^2 p_t^\sS (u_t^\sS-u_t^\sI)(\varphi_t^\sI-\varphi_t^\sS)+\frac{1}{2}\beta^2 (p_t^\sI)^2(u_t^\sS-u_t^\sI)^2 \psi_t^\sS\big]\textcolor{blue}{\lambda_t^{\sI *}} \\
\textcolor{blue}{\lambda_t^{R*}} &= \bar{\lambda}^\sR-\frac{m^\sR p_t^\sR\chi_t}{K^\sR} \\
\textcolor{blue}{v_t^*} &= \bar{v} + \frac{p_t^\sS}{K^v} (\varphi_t^\sS - \varphi_t^\sR) - \frac{u_t^\sS - u_t^\sR}{K^v}\psi_t^\sS + \frac{m^v}{K^v} p_t^\sS \chi_t
\end{aligned}
    \end{equation}
\normalsize
where the state processes $(u_t, p_t, B_t)_{t \in [0,T]}$ and the corresponding adjoint processes $\varphi = (\varphi_t^\sS,\varphi_t^\sI,\varphi_t^\sR)_{t \in [0,T]}$, $\psi = (\psi_t^\sS,\psi_t^\sI,\psi_t^\sR)_{t \in [0,T]}$ and $\chi = (\chi_t)_{t \in [0,T]}$ satisfy the following FBODE system
\small
\allowdisplaybreaks
\begin{align}
        \dot p_t^\sS&=-\beta\hat{\alpha}_t^\sS\lambda_t^{\sI*}p_t^\sI p_t^\sS-v^*_t p_t^\sS+\eta p_t^\sR,\nonumber\\
\dot p_t^\sI&=\beta\hat{\alpha}_t^\sS \lambda_t^{\sI*}p_t^\sI p_t^\sS-\gamma p_t^\sI,\nonumber\\
\dot p_t^\sR&=\gamma p_t^\sI+v_t^*p_t^\sS-\eta p_t^\sR,\nonumber \\
\dot u_t^\sS&=\beta\hat{\alpha}_t^\sS \lambda_t^{\sI*}p_t^\sI\big(u_t^\sS-u_t^\sI\big)+v^*_t\big(u_t^\sS-u_t^\sR\big) - \big(\lambda_t^{\sS*}-\hat{\alpha}_t^\sS\big)^2,\nonumber\\
\dot u_t^\sI&=\gamma\big(u_t^\sI-u_t^\sR\big)-c_{\sI},\nonumber\\
\dot u_t^\sR&=\eta\big(u_t^\sR-u_t^\sS\big),\nonumber\\
\dot B_t &= \sum_{e\in E} - m^e(1 - \lambda_t^{e*}) p_t^e
- m^{v}v^{*}_{t} p_t^\sS, \nonumber\\
\dot{\varphi}_t^\sS&=\beta\hat{\alpha}_t^\sS \lambda_t^{\sI*} p_t^\sI (\varphi_t^\sS - \varphi_t^\sI)+v_t^*(\varphi_t^\sS - \varphi_t^\sR)\nonumber\\&\hspace{3cm}+\big(m^\sS(1-\lambda_t^{\sS*})+ m^v v_t^*\big)\chi_t, \nonumber\\
\dot{\varphi}_t^\sI&=-2 K^p p_t^\sI+\Big(\beta \lambda_t^{\sS*} \lambda_t^{\sI*}+\beta^2(\lambda_t^{\sI*})^2 p_t^\sI (u_t^\sS-u_t^\sI)\Big) p_t^\sS (\varphi_t^\sS-\varphi_t^\sI)\nonumber\\&\quad+\gamma(\varphi_t^\sI-\varphi_t^\sR)
-\Big(\beta \lambda_t^{\sS*} \lambda_t^{\sI*} (u_t^\sS-u_t^\sI) \nonumber\\ &\quad +\frac{\beta^2}{2}(\lambda_t^{\sI*})^2 p_t^\sI(u_t^\sS-u_t^\sI)^2\Big)\psi_t^\sS+ m^\sI (1-\lambda_t^{\sI*})\chi_t, \nonumber\\
\dot{\varphi}_t^\sR&=-\eta(\varphi_t^\sS-\varphi_t^\sR)+ m^\sR(1-\lambda_t^{\sR*})\chi_t,\nonumber \\
\dot{\psi}_t^\sS&=\frac{\beta^2}{2}(\lambda_t^{\sI*})^2 (p_t^\sI)^2 p_t^\sS (\varphi_t^\sS-\varphi_t^\sI)\nonumber \\ &-\Big[\beta \lambda_t^{\sS*} \lambda_t^{\sI*} p_t^\sI+\frac{\beta^2}{2}(\lambda_t^{\sI*})^2 (p_t^\sI)^2 (u_t^\sS-u_t^\sI)\Big] \psi_t^\sS+\eta \psi_t^\sR - v^*_t \psi_t^\sS,\nonumber \\
\dot{\psi}_t^\sI&=-\frac{\beta^2}{2}(\lambda_t^{\sI*})^2 (p_t^\sI)^2 p_t^\sS(\varphi_t^\sS-\varphi_t^\sI)+\Big[\beta \lambda_t^{\sS*} \lambda_t^{\sI*} p_t^\sI\nonumber\\&\quad+\frac{\beta^2}{2}(\lambda_t^{\sI*})^2 (p_t^\sI)^2 (u_t^\sS-u_t^\sI)\Big] \psi_t^\sS-\gamma \psi_t^\sI,\nonumber \\
\dot{\psi}_t^\sR&=v^*_t \psi_t^\sS+\gamma \psi_t^\sI -\eta \psi_t^\sR,\nonumber \\
\dot{\chi}_t&=0,\label{eq:smfe_fbode}
\end{align}
\normalsize
with initial and terminal conditions
\small
$$u^e_T = 0, p^e_0 = \pi_0^e, B_0 = b_0, \quad e \in E,$$
$$\varphi^e_T = 0, \psi^e_0 = 0,\chi_T = 2K^b \min\{0,B_T\}, \quad e \in E.$$
\normalsize
\end{theorem}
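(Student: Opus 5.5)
We recast the principal's bi-level problem as a single deterministic optimal control problem and apply the Pontryagin Maximum Principle. By Proposition~\ref{prop:minor_well_posedness}, every admissible $\theta\in\Theta$ induces a unique MFNE. By Proposition~\ref{prop:minor_charac}, this MFNE is described by the FBODE (\ref{eq:minor_KFP})--(\ref{eq:minor_HJB}). We therefore take the controlled state of the principal to be $(p_t,u_t,B_t)\in\mathbb{R}^7$, with controls $\theta_t=(\lambda_t^\sS,\lambda_t^\sI,\lambda_t^\sR,v_t)$. We substitute the consistency condition $Z_t=\lambda_t^\sI p_t^\sI$ and the feedback $\hat\alpha_t^\sS=\lambda_t^\sS+\tfrac12\beta\lambda_t^\sI p_t^\sI(u_t^\sS-u_t^\sI)$ into the dynamics, so that all drifts become explicit smooth functions of state and control. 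These give exactly the first seven equations of (\ref{eq:smfe_fbode}).

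The state has mixed boundary data: $p$ and $B$ are fixed at time $0$, while $u$ is fixed at time $T$ and free at $0$. We handle this with the standard PMP for two-point boundary problems, or equivalently by a Lagrangian argument. For the adjoints, $p$ and $B$ get adjoints $\varphi$ and $\chi$ with terminal transversality conditions $\varphi_T=0$ and $\chi_T=\Psi'(B_T)=2K^b\min\{0,B_T\}$. Since $u_0$ is free, the adjoint $\psi$ of $u$ gets the transversality condition $\psi_0=0$ at the initial time. We define the Hamiltonian
\[
H=K^p(p^\sI)^2+\Phi_\lambda(\lambda)+\Phi_v(v)+\langle\varphi,\dot p\rangle+\langle\psi,\dot u\rangle+\chi\,\dot B,
\]
and the adjoint equations are $\dot\varphi^e=-\partial_{p^e}H$, $\dot\psi^e=-\partial_{u^e}H$ and $\dot\chi=-\partial_B H=0$, since nothing depends on $B$. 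Differentiating is routine but heavy, and the chain rule through $\hat\alpha^\sS$ has to be tracked carefully. The term $-(\lambda^\sS-\hat\alpha^\sS)^2=-\tfrac14\beta^2(\lambda^\sI)^2(p^\sI)^2(u^\sS-u^\sI)^2$ in $\dot u^\sS$, together with the products $\hat\alpha^\sS\lambda^\sI p^\sI$, produce the $\beta^2$ terms. Budget terms contribute $m^e(1-\lambda^e)\chi$ and $m^v v\chi$ to $\dot\varphi$. We will state the sign convention for the adjoints explicitly and check it against the displayed equations.

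For the optimality conditions, we rigorously justify that $p_t,u_t$ depend differentiably on $\theta$ through a variational argument. This uses the uniqueness and boundedness from Proposition~\ref{prop:minor_well_posedness}, which makes the control-to-state map well defined, and the linearized system, which is uniquely solvable in the short-horizon regime by the same contraction argument. A needle or convex perturbation of an optimal $\theta^*$ then yields, for a.e.\ $t$, that $\theta_t^*$ minimizes $H$ over $[0,1]^3\times[0,V]$. Because $\Theta$ is box-constrained, the stated equalities (\ref{eq:smfe_control}) are the interior first-order conditions $\partial_{\lambda^e}H=0$ and $\partial_v H=0$. We will note that they hold when the minimizer is interior; otherwise they become the projected versions.

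Computing these derivatives then gives the stated formulas. For $\lambda^\sR$ and $v$, $H$ is quadratic and separable, giving the explicit formulas. For $(\lambda^\sS,\lambda^\sI)$, $H$ is quadratic with a cross term, giving the displayed $2\times2$ linear system. The main obstacle is the well-posedness and differentiability of the control-to-state map for the mixed forward–backward state, including the correct transversality condition $\psi_0=0$. We will handle it by writing $J_G$ plus the duality pairings of the adjoints with the state equations, integrating by parts, and using the linearized FBODE to cancel the state variations.
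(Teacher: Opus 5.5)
Your proposal follows the paper's proof essentially step for step: the same seven-dimensional principal state $(p,u,B)$ with the consistency condition substituted, the same Hamiltonian $\ell+\varphi\cdot f_p+\psi\cdot f_u+\chi b$ with $\dot\varphi=-\partial_p\mathcal{H}$, $\dot\psi=-\partial_u\mathcal{H}$, $\dot\chi=0$, and the same transversality conditions $\varphi_T=0$, $\psi_0=0$ (because $u_0$ is free) and $\chi_T=2K^b\min\{0,B_T\}$. Differentiating $\mathcal{H}$ and setting the control derivatives to zero reproduces the displayed equations. Your extra points are sound and go beyond the paper, which tacitly assumes an interior minimizer and omits any justification: differentiability of the control-to-state map via the uniquely solvable linearized system, and the observation that the equalities hold only at interior minimizers (projected conditions otherwise).
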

\begin{proof}
    Due to the hierarchical structure of Stackelberg equilibrium, we consider the value function and density flow under MFNE together with budgeting process as a 7-dimension state of the principal, and solve a nonlinear optimal control problem to minimize $J_G$. For brevity, we omit detailed calculations and only state a high-level idea here.
    Let $\varphi$ be the adjoint process for $p$, $\psi$ be the adjoint process for $u$ and $\chi$ be that for $B$. Denote the running cost of principal's objective $J_G$ as $\ell$, the density flow dynamics~\eqref{eq:minor_KFP} as $f_p$ and that of value function~\eqref{eq:minor_HJB} as $f_u$. The principal's Hamiltonian writes
    \small
    \begin{equation}
        \label{eq:p_hamiltonian}
        \begin{aligned}
            &\mathcal{H}(t, p, u, B, \varphi, \psi, \chi ; \lambda, v)=\ell(t, p, \lambda, v)\\ &+\varphi \cdot f_p(t, p, u, \lambda, v) +\psi \cdot f_u(t, p, u, \lambda, v)+\chi b(t, \lambda, v, p).
        \end{aligned}
    \end{equation}
    \normalsize
    Let $\theta^*=\left(\lambda^*, v^*\right)$ be an SMFE control and $(p^*, u^*, B^*)$ be the corresponding state processes; by Pontryagin maximum principle, the state system and adjoint system under control optimality satisfy 
    \small
    $$\dot{p}^*=f_p^*, \dot{u}^*=f_u^*, \dot{B}^*=b^*; p^*(0)=\pi_0, u^*(T)=0, B^*(0)=b_0$$
    \normalsize
    and
    $$\begin{aligned}
    \dot{\varphi}_t&=-\partial_p \mathcal{H}(t, p_t^*, u_t^*, B_t^*, \varphi_t, \psi_t, \chi_t ; \lambda_t^*, v_t^*), \quad \varphi_T=0, \\
\dot{\psi}_t&=-\partial_u \mathcal{H}(t, p_t^*, u_t^*, B_t^*, \varphi_t, \psi_t, \chi_t ; \lambda_t^*, v_t^*), \quad \psi_0=0, \\
\dot{\chi}_t&=-\partial_B \mathcal{H}(t, p_t^*, u_t^*, B_t^*, \varphi_t, \psi_t, \chi_t ; \lambda_t^*, v_t^*), \\& \hspace{4.5cm} \chi_T=2 K^b \min \{0, B_T^*\},
\end{aligned}$$
where the principal control at time $t \in [0,T]$ satisfies
\small $$\left(\lambda_t^*, v_t^*\right) \in \argmin_{(\lambda, v) \in[0,1]^3 \times\left[0,V\right]} \mathcal{H}\left(t, p_t^*, u_t^*, B_t^*, \varphi_t, \psi_t, \chi_t ; \lambda, v\right).$$\normalsize
We are able to give a solution of the optimal controls using first-order condition. The claimed result then follows from straightforward calculations.
\end{proof}

\begin{lemma}
    \label{lemma:h_convexity}
    For any $(\varphi, \psi)$ that solves the FBODE system in~\eqref{eq:smfe_fbode}, under short enough horizon time $T > 0$, the Hamiltonian~\eqref{eq:p_hamiltonian} is strongly convex in $(\lambda^\sS,\lambda^\sI)$ and admits a unique minimizer as described by~\eqref{eq:smfe_control}.
\end{lemma}
\begin{proof}
    First note that by model formulation, all other variables are bounded: for any $t \in [0,T]$ and $e \in E$, $|B_t| \leq b_0+(m^\sS+m^\sI+m^\sR+m^v V)T =: \bar{B}$, $|\chi_t| \leq 2 K^b \bar{B}$; also $|p_t^e| \leq 1$ and by definition of value function, $u_t^e \leq c_\sI T = O(T)$.  Since the dynamics of $(\varphi,\psi)$ is linear with any admissible policy fixed, uniform boundedness is needed for the existence of the solution for ODEs related to $(\varphi,\psi)$. Denote the bounds as $\bar\varphi$ and $\bar\psi$ respectively, direct integration shows that $\bar\varphi = O(T)$ and $\bar\psi = O(T^2)$.

    To verify the strong convexity of Hamiltonian, we examine the invertibility of its Hessian matrix
    \small $$M_t:=\bigg(\begin{array}{cc}
\partial^2_{\lambda^{\sS}\lambda^{\sS}} \mathcal{H}_t & \partial^2_{\lambda^{\sS}\lambda^{\sI}} \mathcal{H}_t \\
\partial^2_{\lambda^{\sI}\lambda^{\sS}} \mathcal{H}_t & \partial^2_{\lambda^{\sI}\lambda^{\sI}} \mathcal{H}_t
\end{array}\bigg).$$ \normalsize
The four coefficients in the first two equations of~\eqref{eq:smfe_control} are precisely the entries of $M_t$.
    We thereby write the exact form of matrix $M_t$ as $(K^\sS \quad q_t; q_t \quad n_t)$, where
    \small
    \begin{equation*}
        \begin{aligned}
            q_t &:= \beta p_t^\sI p_t^\sS(\varphi_t^\sI-\varphi_t^\sS)+\beta p_t^\sI(u_t^\sS-u_t^\sI) \psi_t^\sS, \\
            n_t &:= K^\sI + \beta^2 (p_t^\sI)^2 p_t^\sS (u_t^\sS-u_t^\sI)(\varphi_t^\sI-\varphi_t^\sS) +\frac{1}{2}\beta^2 (p_t^\sI)^2(u_t^\sS-u_t^\sI)^2 \psi_t^\sS.
        \end{aligned}
    \end{equation*}
    \normalsize
    We have the estimation $|q_t| = O(T)$ and $|n_t - K^\sI| = O(T^2)$. Then the determinant of $M_t$ gives
    \small
    \begin{equation*}
        \begin{aligned}
            \det(M_t)&:=K^\sS n_t - (q_t)^2 \\ &= K^\sS K^\sI + K^\sS (n_t - K^\sI) - (q_t)^2 \\ &\geq K^\sS K^\sI - K^\sS |n_t - K^\sI| - |q_t|^2 \\ & = K^\sS K^\sI - O(T^2)
        \end{aligned}
    \end{equation*}
    \normalsize
    Select small enough $T$ such that $\det(M_t) \geq \frac{1}{2}K^\sS K^\sI>0$, the Hamiltonian admits a unique minimizer and first-order conditions apply.
\end{proof}

\begin{theorem}
    \label{thm:well_posedness}
    Under short enough time horizon $T > 0$, there exists a unique bounded continuous solution to the FBODE system~\eqref{eq:smfe_fbode}.
\end{theorem}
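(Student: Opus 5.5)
We will prove Theorem~\ref{thm:well_posedness} by a Banach fixed-point argument on a space of continuous paths. The map will be built by decoupling the forward and backward components through the feedback controls~\eqref{eq:smfe_control}, which Lemma~\ref{lemma:h_convexity} makes a well-defined Lipschitz function of the state and adjoint variables.

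\textbf{Step 1 (the feedback map).} Let $X_t := (p_t,u_t,B_t,\varphi_t,\psi_t,\chi_t)$. On a bounded set
\[
\mathcal{K} := \{|p^e|\le 1,\ |u^e|\le C_uT,\ |B|\le \bar B,\ |\chi|\le 2K^b\bar B,\ |\varphi^e|\le C_\varphi T,\ |\psi^e|\le C_\psi T^2\},
\]
the Hessian $M_t$ satisfies $\det M_t\ge \tfrac12 K^\sS K^\sI$, by the estimate in the proof of Lemma~\ref{lemma:h_convexity}. Hence the $2\times 2$ linear system in~\eqref{eq:smfe_control} has a unique solution $(\lambda^\sS,\lambda^\sI)$, given by Cramer's rule. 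Together with the explicit formulas for $\lambda^\sR$ and $v$, then projected onto $[0,1]^3\times[0,V]$, this defines a map $\Gamma:\mathcal{K}\to[0,1]^3\times[0,V]$. Because $\Gamma$ is built from rational functions with denominators bounded away from zero, and projections are $1$-Lipschitz, it is Lipschitz on $\mathcal{K}$ with a constant $L_\Gamma$ uniform in $T\le T_0$. Composing with $\hat\alpha^\sS=\lambda^\sS+\tfrac{\beta}{2}\lambda^\sI p^\sI(u^\sS-u^\sI)$ shows that every right-hand side in~\eqref{eq:smfe_fbode} is a locally Lipschitz function $F(X)$ on $\mathcal{K}$, with a Lipschitz constant $L_F$ independent of $T$.

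\textbf{Step 2 (fixed-point map).} Let $\mathcal{S}$ be the set of continuous paths $t\mapsto X_t$ with values in $\mathcal{K}$, equipped with the sup norm; it is a complete metric space. Given $X\in\mathcal{S}$, define $\mathcal{T}(X)$ by integrating each component from its own boundary condition with the right-hand side frozen at $X$:
\begin{itemize}
\item $p$, $B$ and $\psi$ are integrated forward from $p_0=\pi_0$, $B_0=b_0$ and $\psi_0=0$;
\item $u$ and $\varphi$ are integrated backward from $u_T=0$ and $\varphi_T=0$;
\item $\chi$ is set to the constant $2K^b\min\{0,B_T\}$, where $B_T$ is the newly computed value.
\end{itemize}
A fixed point of $\mathcal{T}$ is exactly a solution of~\eqref{eq:smfe_fbode}.

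\textbf{Step 3 (invariance).} We show $\mathcal{T}(\mathcal{S})\subset\mathcal{S}$ for small $T$. For $u$, $B$ and $\chi$ this follows from the a priori bounds already used in Lemma~\ref{lemma:h_convexity}. For $\varphi$ and $\psi$, integrate over an interval of length $T$ with bounded integrands: the $\varphi$ equations have sources of size $O(1)$, which gives $O(T)$, and the $\psi$ equations are driven by $\varphi$ terms, which gives $O(T^2)$. Choosing $C_\varphi$ and $C_\psi$ large and then $T$ small closes these bounds. For $p$, we must make sure the forward image stays in the simplex. We will argue this by noting that the frozen KFP equation is a linear Kolmogorov equation $\dot p = p\,Q_t$ with a nonnegative-rate generator, so it preserves $\Delta_p$. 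If needed, we first replace $Z$ by $\max\{Z,0\}$ and truncate $\hat\alpha^\sS$ to $[0,1]$ inside $F$, and check afterwards that at the fixed point these truncations are inactive.

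\textbf{Step 4 (contraction).} For $X,X'\in\mathcal{S}$ we have $\|\mathcal{T}(X)-\mathcal{T}(X')\|_\infty\le T L_F\|X-X'\|_\infty$ for every component except $\chi$. For $\chi$, $|\chi_t-\chi'_t|\le 2K^b|B_T-B'_T|\le 2K^bTL_F\|X-X'\|_\infty$. So $\mathcal{T}$ is a contraction once $T<1/((1+2K^b)L_F)$, and Banach's theorem gives existence and uniqueness in $\mathcal{S}$. Continuity holds by construction.

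\textbf{Main obstacle.} The hardest step will be showing that $L_F$ is indeed uniform in $T$. This requires the lower bound on $\det M_t$ from Lemma~\ref{lemma:h_convexity} to hold for every input path in $\mathcal{K}$, not only along solutions. It also requires handling the non-smooth projection onto the control box together with the kink of $\min\{0,B_T\}$; both are $1$-Lipschitz, so they should not break the contraction. Uniqueness among all bounded solutions, rather than only those in $\mathcal{K}$, then follows because any bounded solution automatically satisfies the a priori bounds of Step 3.
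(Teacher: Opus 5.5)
Your argument works, but it uses a different decomposition from the paper's.

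The paper applies Banach's theorem on the control path alone, $\theta\in C([0,T];[0,1]^3\times[0,V])$. It composes four solution maps:
\begin{enumerate}
\item the minor-agent FBODE for $(u,p)$, solved via Proposition~\ref{prop:minor_well_posedness};
\item the budget $B$ and its adjoint $\chi$;
\item the linear forward--backward system for $(\varphi,\psi)$;
\item the $2\times 2$ optimality system \eqref{eq:smfe_control}.
\end{enumerate}
Each stage is shown Lipschitz by Gr\"onwall estimates on squared norms. Separate small-$T$ conditions, such as $e^{(A_1+B_1)T}A_2B_2T^2<1$, decouple the two forward--backward pairs, and $T$ is finally shrunk so that the product of constants is below one.

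You instead eliminate the control through the feedback map $\Gamma$ and run a single Picard contraction on the full state--adjoint path. All forward--backward couplings are then absorbed at once, with a contraction factor of order $T$. This avoids both the black-box use of Proposition~\ref{prop:minor_well_posedness} and the nested estimates. The price is the point you flagged: the determinant bound of Lemma~\ref{lemma:h_convexity} must hold on all of $\mathcal{K}$, not only along solutions. It does, because that proof only uses the size bounds, and $L_F$ is uniform because $\mathcal{K}$ shrinks with $T$. The paper's route gets the $O(T)$ and $O(T^2)$ bounds on $(\varphi,\psi)$ for free, since each stage is solved exactly from an admissible policy, and it keeps the policy as the iterate, as in Algorithm~\ref{alg:stackelberg_single_iter}.

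A few loose ends need tightening.

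\textbf{Projection.} Projecting onto $[0,1]^3\times[0,V]$ changes the system wherever it is active. For the coupled pair $(\lambda^{\sS},\lambda^{\sI})$ with $q_t\neq 0$, clipping the unconstrained minimizer is not even the constrained minimizer of $\mathcal{H}$. So you must show the projection is inactive at the fixed point; the paper relies on the same fact when it asserts $f(\mathcal{M})\subset\mathcal{M}$. This is easy for short horizons. At the fixed point the projected controls are bounded, so $B_T\ge b_0-O(T)>0$ and hence $\chi\equiv 0$. With $u=O(T)$, $\varphi=O(T)$ and $\psi=O(T^2)$, the formulas \eqref{eq:smfe_control} then give controls within $O(T)$ of $(\bar\lambda,\bar v)$. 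These lie in the box provided $\bar\lambda^{\sS},\bar\lambda^{\sI}<1$, $\bar\lambda^{\sR}\le 1$ and $\bar v<V$.

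\textbf{The update for $p$.} Your Step~2 (right-hand side frozen at $X$) and Step~3 (a linear Kolmogorov equation) describe different maps for $p$. The fully frozen update need not preserve $|p^e|\le 1$ or the simplex. For example, with $\pi_0^{\sR}=0$, $\eta>0$ and frozen $p=(0,0,1)$, the new $p^{\sR}_t=-\eta t$. Either use $\dot p=p\,Q_t(X)$, which adds an $e^{cT}$ factor in Step~4, or enlarge $\mathcal{K}$ to $|p^e|\le 2$ and recover nonnegativity at the fixed point.

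\textbf{The bound on $u$.} The bound $|u^e|\le C_uT$ for the iterates must come from direct integration, not from the value-function property used in Lemma~\ref{lemma:h_convexity}.

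\textbf{Uniqueness beyond $\mathcal{K}$.} Your extension of uniqueness is circular as stated. The a priori bound on $\varphi$ needs bounded controls, which need $\det M_t$ bounded below, which in turn needs $\varphi$ small. The extension does hold for solutions whose controls lie in the box, and that is the same class the paper's argument covers.
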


\begin{proof}
    The proof relies on Banach fixed point theorem and Lemma~\ref{lemma:h_convexity}. We construct a fixed-point mapping for the FBODE system in~\eqref{eq:smfe_fbode} and use Banach fixed point theorem to conclude the proof. Due to the mathematically heavy system, we deliver the high level idea first. Consider the policy space $\mathcal{M} := C([0,T];[0,1]^3 \times [0,V])$ and endow it with norm $\|\theta\|_T := \sup_{t\in[0,T]} \|\theta_t\|_2$. Fix $\theta \in \mathcal{M}$, we first solve the minor agent density flow and value function to obtain $(u,p)$, the existence and uniqueness of solution is guaranteed by Proposition~\ref{prop:minor_well_posedness}. Then with $(u,p)$, we solve the budget state process and its adjoint variable $(B, \chi)$. Next with $(u,p,B,\chi)$, we solve the adjoint processes $(\varphi,\psi)$ where the existence and uniqueness of a continuous solution is guaranteed due to the linear structure of the ODEs while other variables fixed. Finally, we solve the optimality conditions provided in~\eqref{eq:smfe_control} to achieve $\tilde{\theta}$. The described procedure corresponds to a mapping $\theta \mapsto h_1(\theta) =: (u,p) \mapsto h_2(u,p) =: (B,\chi) \mapsto h_3(B, \chi) =: (\varphi,\psi) \mapsto h_4(\varphi,\psi) =: \tilde{\theta}$.
    Consider the fixed-point mapping $f := h_4 \circ h_3 \circ h_2 \circ h_1$. Note $f(\mathcal{M}) \subset \mathcal{M}$ under large $V$, 
it suffices to show that $f$ is a contraction mapping. That is, for any $\theta_1, \theta_2 \in \mathcal{M}$, there is $\|f(\theta_1) - f(\theta_2)\|_T \leq C \|\theta_1 - \theta_2\|_T$ for some constant $C < 1$. A detailed derivation and further explanations can be found in the appendix.
\end{proof}

\section{Simulation Study}
\label{sec:simulation_study}
\subsection{Numerical Algorithm}
\label{sec:numerical_algo}
Following the theoretical characterization in Section~\ref{sec:smfe_charac}, we propose an algorithm to solve the FBODE system~\eqref{eq:smfe_fbode} characterizing the Stackelberg MFG equilibrium. The algorithm proceeds via fixed-point iteration, as summarized in Algorithm~\ref{alg:stackelberg_single_iter}. For a given candidate principal policy, we iteratively update: (i) the \textbf{minor} KFP equation for the state distribution and the \textbf{minor} HJB equation for the value function; (ii) the principal's budget trajectory via its forward ODE; and (iii) the principal's adjoint system, in which the adjoint variable associated with the minor state density evolves backward in time while that associated with the minor value function evolves forward. Using the updated state and adjoint trajectories, we compute a new principal policy from the optimality conditions~\eqref{eq:smfe_control} and repeat until convergence.

Regarding the short horizon assumption in Theorem~\ref{thm:well_posedness}, a possible numerical continuation strategy for longer horizons is to partition $[0,T]$ into sufficiently short subintervals and solve them recursively backward. The relation between the backward and forward variables computed on one subinterval is then used as the terminal condition for the preceding subinterval. The success of this procedure depends on the regularity of the resulting decoupling fields.

{
\small
\begin{algorithm}
\caption{\small Fixed-point iteration for SMFE \label{alg:stackelberg_single_iter}}
\footnotesize
\textbf{Input:} Model parameters. Initialize principal control paths $\btheta^{(0)}=(
\lambda_t^{\sS,(0)},
\lambda_t^{\sI,(0)},
\lambda_t^{\sR,(0)},
v_t^{(0)})_{t\in[0,T]},$
the state trajectories
$\bp^{(0)}=(p_0^{(0)},p_{\Delta t}^{(0)},\dots,p_T^{(0)}),
\bu^{(0)}=(u_0^{(0)},u_{\Delta t}^{(0)},\dots,u_T^{(0)})$,
the budget path $\boldsymbol B^{(0)}=(B_0^{(0)},B_{\Delta t}^{(0)},\dots,B_T^{(0)})$,
and the adjoint trajectories
$\bvarphi^{(0)}=(\varphi_0^{(0)},\varphi_{\Delta t}^{(0)},\dots,\varphi_T^{(0)})$, $
\bpsi^{(0)}=(\psi_0^{(0)},\psi_{\Delta t}^{(0)},\dots,\psi_T^{(0)})$.
Set iteration counter $m=0$ and tolerance $\epsilon$.
\vskip1mm
\begin{algorithmic}[1]

\WHILE{
$\|x^{(m+1)}-x^{(m)}\|>\epsilon$ for any $x \in \{\blambda, \bp, \bu, \bvarphi, \bpsi, \boldsymbol{B}\}$}\vskip1mm
    \STATE Calculate the follower best-response socialization level
    $\alpha_t^{\sS,(m+1)}=
    \lambda_t^{\sS,(m)}
    +\frac{\beta}{2}\lambda_t^{\sI,(m)}p_t^{\sI,(m)}
    (u_t^{\sS,(m)}-u_t^{\sI,(m)})$,
    $\alpha_t^{\sI,(m+1)}=\lambda_t^{\sI,(m)}$, and 
    $\alpha_t^{\sR,(m+1)}=\lambda_t^{\sR,(m)}$.\vskip1mm

    \STATE Update $\bp^{(m+1)}$ by solving KFP with $\boldsymbol\alpha^{(m+1)}=(\alpha^{\sS,(m+1)},\alpha^{\sI,(m+1)},\alpha^{\sR,(m+1)})$ and $\btheta^{(m)}$.\vskip1mm

    \STATE Update $\bu^{(m+1)}$ by solving HJB with $\boldsymbol\alpha^{(m+1)},\btheta^{(m)},\bp^{(m+1)}.$\vskip1mm

    \STATE Update $\boldsymbol{B}^{(m+1)}$ by solving ODE~\eqref{eq:budget_spec} with $\bp^{(m+1)}$ and $\btheta^{(m)}$.\vskip1mm

    \STATE Set $\chi^{(m+1)} = 2K^b \min\{0,B_T^{(m+1)}\}$
    and $\chi_t^{(m+1)}\equiv \chi^{(m+1)}$ for all $t\in[0,T]$.\vskip1mm

    \STATE Update $\bvarphi^{(m+1)}$ by solving adjoint ODE in~\eqref{eq:smfe_fbode}
    with $\bp^{(m+1)}$, $\bu^{(m+1)}$, $\bpsi^{(m)}$, $\boldsymbol{\alpha}^{(m+1)}$, $\btheta^{(m)}$, and $\chi^{(m+1)}$.\vskip1mm

    \STATE Update $\bpsi^{(m+1)}$ by solving adjoint ODE in~\eqref{eq:smfe_fbode}
    with $\bp^{(m+1)}$, $\bu^{(m+1)}$, $\bvarphi^{(m+1)}$, and $\btheta^{(m)}$.\vskip1mm

    \STATE Compute $\btheta^{(m+1)}$ by plugging
    $\bp^{(m+1)},
    \bu^{(m+1)},
    \bvarphi^{(m+1)},
    \bpsi^{(m+1)},
    \chi^{(m+1)}$
    into optimality conditions in~\eqref{eq:smfe_control}.\vskip1mm
    \STATE Set $m\leftarrow m+1$.\vskip1mm

\ENDWHILE

\STATE Perform one final forward-backward update under the converged control $\hat{\btheta}$ to obtain
$\hat{\boldsymbol\alpha},
\hat{\bp},
\hat{\bu},
\hat{\boldsymbol B},
\hat{\bvarphi},
\hat{\bpsi},
\hat{\chi}$.\vskip1mm

\RETURN
$(\hat{\btheta}, \hat{\boldsymbol\alpha},
\hat{\bp},
\hat{\bu},
\hat{\boldsymbol B},
\hat{\bvarphi},
\hat{\bpsi},
\hat{\chi})$
\end{algorithmic}
\end{algorithm}
}

\subsection{Experiment Results}
We present the numerical results under parameter setting as specified in Table~\ref{tab:numerical_param} and compare the Stackelberg MFG equilibrium outcome to two benchmark cases in which the government policy is fixed. In the first benchmark, there is essentially no regulation with relaxed social distancing guidelines and no vaccination, where $\lambda_t^\sS=\lambda_t^\sI=\lambda_t^\sR=0.95$ and $v_t=0$. In the second benchmark, the government adopts a light regulation policy that exactly matches the public health authority's recommended levels for social distancing and vaccination, namely $\lambda_t^e=\bar{\lambda}^e$ and $v_t=\bar v$. In the figures, these three cases are labeled \texttt{Optim}, \texttt{NoReg}, and \texttt{Light}, respectively.

Since the FBODE system in~\eqref{eq:smfe_fbode} is mathematically heavy, we first illustrate the convergence of Algorithm~\ref{alg:stackelberg_single_iter} in the top panel of Figure~\ref{fig:conv_density}. We briefly comment here on the numerical convergence behavior observed under other parameter configurations. The convergence is stable for short time horizons, in agreement with Theorem~\ref{thm:well_posedness}. It also remains stable when the scaling parameters in principal's cost functional are relatively small. We note that issues may occur when the initial budget $b_0$ is very small or the vaccination cost is very high, as the resulting optimal policy may fall outside the admissible set without additional clipping, which is not considered in this paper.
We chose the model parameters such that the solutions fall into the admissible set.

In the bottom panels of Figure~\ref{fig:conv_density}, we compare the state density flows under the Stackelberg MFG equilibrium and the two fixed-policy benchmarks. We observe that infection is well-controlled in the SMFE (denoted with \texttt{Optim}) case and remains visibly lower than under either fixed policies. Figure~\ref{fig:budget_allo} reports the budget-related results. We first note that the unit-agent spending (obtained by dividing the cost by the state density to which the policy is applied) associated with social distancing for infected individuals is the highest among all policy instruments, which is consistent with the practical observation that quarantining infected individuals is particularly costly due to, for example, accommodation and sanitization expenses. Compared with the light-regulation benchmark, the Stackelberg solution allocates a larger share of expenditure to vaccination and in general less to social distancing. As a result, the remaining budget in the SMFE case decreases more sharply at the beginning, reflecting the larger initial investment in vaccination at early stages, and at the end induces a substantially smaller budget deficit. This shows that it is better for the government to take actions early to prevent infections proactively.

Figure~\ref{fig:policy_minor} presents the SMFE policies and also shows the corresponding MFG equilibrium socialization levels of the minor agents. We observe that the vaccination level decreases over time and eventually falls below the public health authority's recommended level, which is consistent with the budget allocation results. We also observe that the social distancing guideline for state $\sS$ gradually declines towards the recommended level $0.8$, while susceptible individuals further reduce their socialization level when the infection prevalence is high to proactively protect themselves from the disease. 

We remark that the numerical results depend strongly on the model parameters. For the disease-specific parameters, we use values inspired by~\cite{liu2025incorporatingauthorityperceptioneconomic}, for example $\gamma$ is taken to be $0.08$ to represent around 1.8 week of recovery time.
{\small
\begin{table}[h]
\centering
\begin{tabular}{p{2cm}p{2.15cm}p{3.2cm}}
\hline
\textbf{Parameter} & \textbf{Value} & \textbf{Description} \\
\hline
$T$ & $70$ & Time horizon \\
$\beta$ & $0.5$ & Transmission rate \\
$\gamma$ & $0.08$ & Recovery rate \\
$\eta$ & $0.0$ & Waning of immunity rate \\
$c_\sI$ & $1.0$ & Infection cost \\
$K^p$ & $0.001$ & Cost weight for infection \\
$(K^\sS, K^\sI, K^\sR, K^v)$ & $(0.1, 0.1, 0.1, 0.3)$ & Cost weights for policies \\
$K^b$ & $1.0$ & Cost weight for budget \\
$(m^\sS, m^\sI, m^\sR, m^v)$ & $(0.1, 0.2, 0.1, 0.5)$ & Budget dynamics scalings \\
$(\bar\lambda^\sS, \bar\lambda^\sI, \bar\lambda^\sR, \bar{v})$ & $(0.8, 0.6, 0.95, 0.02)$ & Recommended policy levels \\
$\pi_0$ & $(0.95, 0.05, 0.0)$ & Initial state densities \\
$b_0$ & $1.0$ & Initial budget \\
$N$ & $10000$ & Number of time steps \\
$\epsilon$ & $0.001$ & Convergence tolerance \\
\hline
\end{tabular}
\caption{\small Model Parameters for the Experiments}
\label{tab:numerical_param}
\end{table}}

\begin{figure}
    \centering
    \includegraphics[width=1.\linewidth]{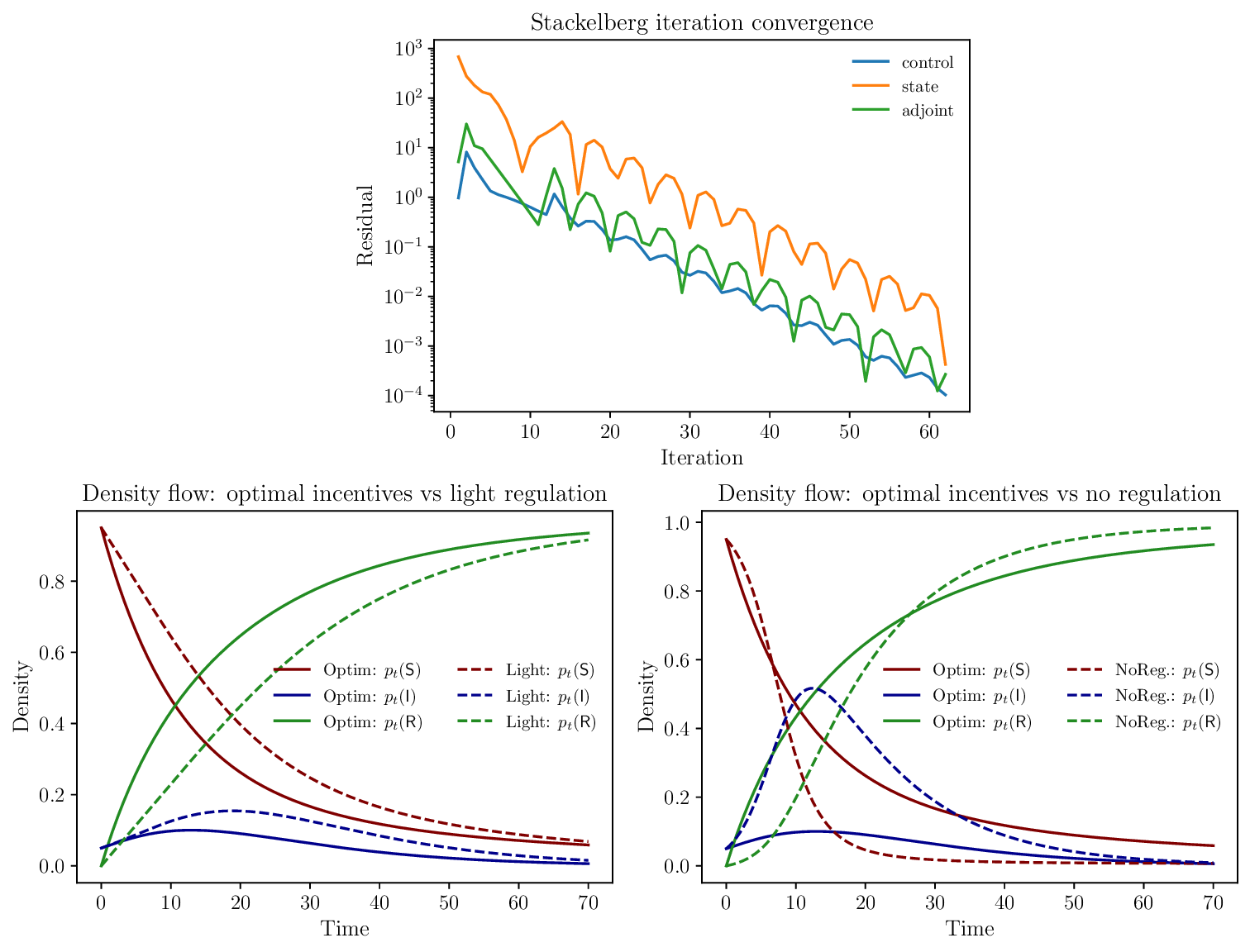}
    \caption{\textbf{Top}: algorithm convergence verification. \textbf{Bottom}: state density flow pairwise comparisons between SMFE (\texttt{Optim}) and fixed policies (\texttt{Light} and \texttt{NoReg}).}
    \label{fig:conv_density}
\end{figure}

\begin{figure}
    \centering
    \includegraphics[width=1.\linewidth]{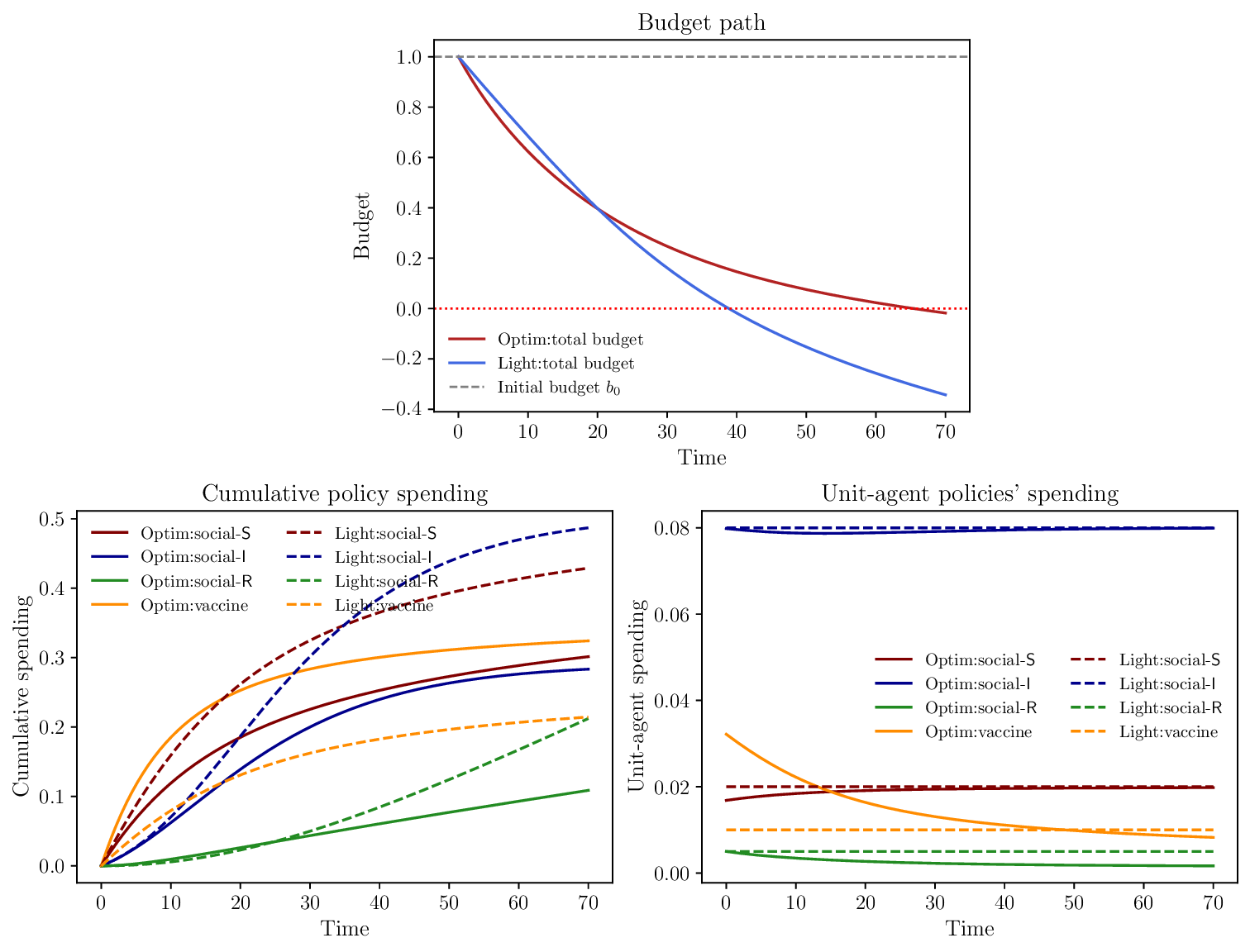}
    \caption{\textbf{Top}: The total budget levels. \textbf{Bottom}: Comparison of cumulative spending (left) and unit-agent spending (right) by policy items.}
    \label{fig:budget_allo}
\end{figure}

\begin{figure}
    \centering
    \includegraphics[width=0.98\linewidth]{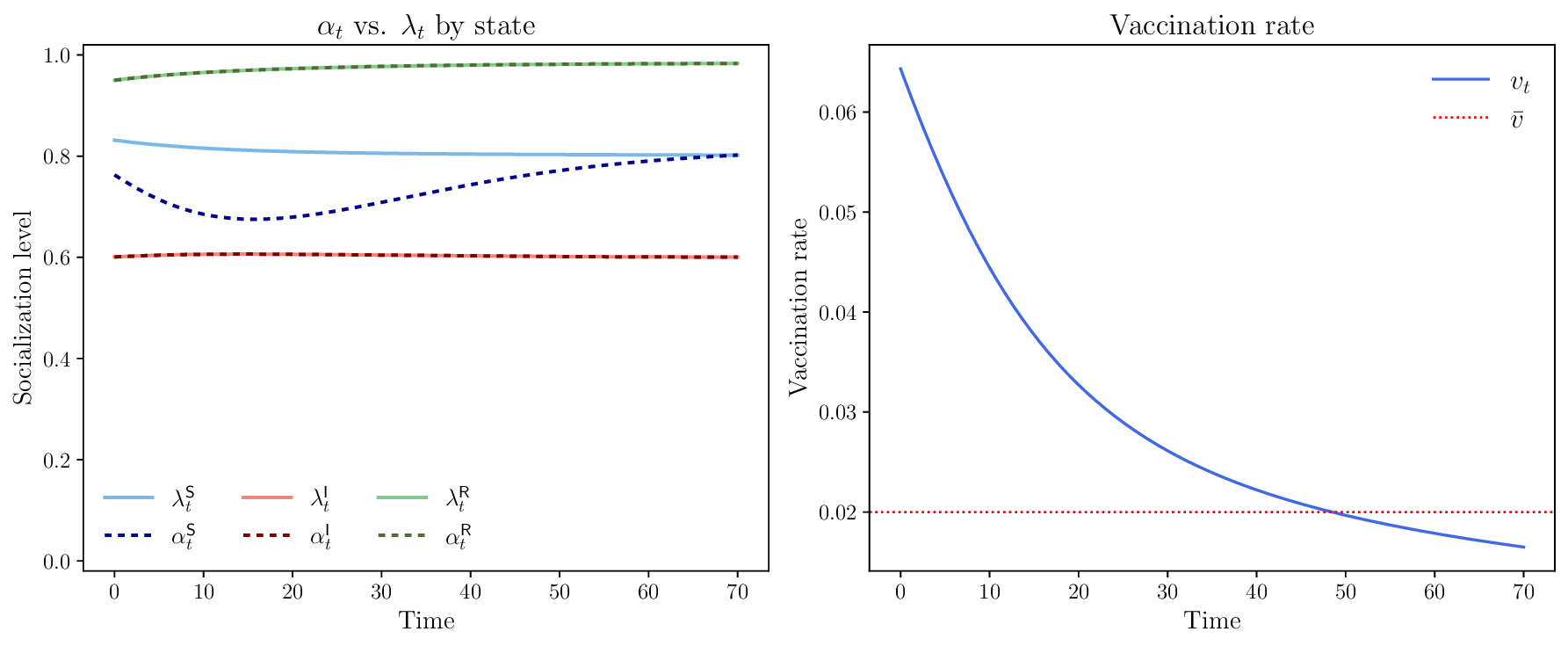}
    \caption{Social distancing and vaccination policies of the government and the minor agent's corresponding MFG socialization levels, under SMFE.}
    \label{fig:policy_minor}
\end{figure}

\section{Conclusions and Future Directions}
\label{sec:future_dir}
This paper introduces the first continuous-time Stackelberg MFGs with additional state processes for the principal with a motivation to model optimal epidemic control with human response. We give necessary conditions for the Stackelberg mean field equilibrium and establish its well-posedness.  We further present numerical results. Several interesting directions remain open for future work.

First, regarding model formulation, we simplify individual vaccination uptake by treating it as a decision made directly by the principal, in order to focus on the allocation of budget between social distancing and vaccination. A natural extension is to incorporate individual decision-making on vaccination. Second, the budget state process in the current framework is deterministic. Introducing noise into the budget dynamics may capture the uncertainty that arises from public health resource allocation and policy implementation in practice, we plan to address this situation in the general model form. Third, this framework could be generalized to multi-population MFGs or to finite-state graphon games to capture the heterogeneities in the population.

\bibliographystyle{ieeetr}
\bibliography{reference.bib}

\newpage
\onecolumn

\appendices

\section{Proof of Theorem~\ref{thm:well_posedness}}
We construct a fixed-point mapping for the FBODE system in~\eqref{eq:smfe_fbode} and use Banach fixed point theorem to conclude the proof. Due to the heavy system, we deliver a high level idea first. Consider the policy space $\mathcal{M} := C([0,T];[0,1]^3 \times [0,V])$ and deploy it with norm $\|\theta\|_T := \sup_t \|\theta_t\|_2$. Fix $\theta \in \mathcal{M}$, we first solve the minor agent density flow and value function to obtain $(u,p)$, the existence and uniqueness of solution is guaranteed by Proposition~\ref{prop:minor_well_posedness}. Then with $(u,p)$, we solve the budget state process and its adjoint variable $(B, \chi)$. Next with $(u,p,B,\chi)$, we solve the adjoint processes $(\varphi,\psi)$ where the existence and uniqueness of a continuous solution is guaranteed since it's a linear ODE with other variables fixed. Finally, we solve the optimality conditions provided in~\eqref{eq:smfe_control} to achieve $\tilde{\theta}$. The described procedure corresponds to a mapping $\theta \mapsto h_1(\theta) =: (u,p) \mapsto h_2(u,p) =: (B,\chi) \mapsto h_3(B, \chi, u, p, \theta) =: (\varphi,\psi) \mapsto h_4(\varphi,\psi, u, p, \chi) =: \tilde{\theta}$. Consider the fixed-point mapping $f := h_4 \circ h_3 \circ h_2 \circ h_1$. Note $f(\mathcal{M}) \subset \mathcal{M}$ under large $V$, 
it suffices to show that $f$ is a contraction mapping. That is, for any $\theta_1, \theta_2 \in \mathcal{M}$, there is $\|f(\theta_1) - f(\theta_2)\|_T \leq C \|\theta_1 - \theta_2\|_T$ for some constant $C < 1$.

Consider $\theta_1 = (\lambda_1,v_1)$ and $\theta_2 = (\lambda_2,v_2)$, and denote the corresponding trajectories as $(u^1,p^1,B^1,\varphi^1,\psi^1)$ and $(u^2,p^2,B^2,\varphi^2,\psi^2)$. For notational simplicity, we also write the induced value differences as $
\Delta p_t^e:=p_t^{1,e}-p_t^{2,e},
\Delta u_t^e:=u_t^{1,e}-u_t^{2,e},
\Delta Z_t:=Z_t^1-Z_t^2$, $\Delta \varphi_t^e =\varphi_t^{1,e} - \varphi_t^{2,e}$, $\Delta \psi_t^e =\psi_t^{1,e} - \psi_t^{2,e}$, $\Delta \hat\alpha^e_t:=\hat\alpha_t^{1,e}-\hat\alpha_t^{2,e}$, 
$\Delta \lambda_t^e:=\lambda_{t}^{1,e}-\lambda_{t}^{2,e},
\Delta v_{t}:=v_{t}^1-v_{t}^2$.

We clarify some boundedness tools first. For the mean-field term, we have $|\Delta Z_t|\leq |\Delta p_t^\sI|+ |\Delta \lambda_t^{\sI}|$. Also by definition of value function ,we have $\left|u_t(e)\right| \leq c_\sI T =: \bar{U}$, for any $e \in E$, $t \in [0,T]$. For the MFNE socialization level, we have $|\Delta\hat\alpha_t^\sS|\le|\Delta \lambda_t^{\sS}|+\beta\bar U |\Delta p_t^\sI|+\beta\bar U |\Delta \lambda_t^{\sI}|+\frac{\beta}{2}|\Delta u_t^\sS|+\frac{\beta}{2}|\Delta u_t^\sI|$. We then proceed with the fixed-point mapping.

\textbf{Step 1. Estimation on $h_1$.} For backward dynamics for minor agents, we have
\begin{equation*}
    \begin{aligned}
        \frac{d}{dt}|\Delta u_t^\sS|^2
        &=2\Delta u_t^\sS\Delta\dot{u}_t^\sS \\
        &\leq 2|\Delta u_t^\sS|\Big[2\beta\bar U|\Delta\lambda_t^{\sS}|+2\beta\bar U|\Delta Z_t|+\beta|\Delta u_t^\sS|
        +\beta|\Delta u_t^\sI|+V|\Delta u_t^\sS|+V|\Delta u_t^\sR| + 2\bar{U} |\Delta v_t| +2|\Delta \hat{\alpha}_t^\sS|\Big] \\
        &\leq 2|\Delta u_t^\sS|\Big[a_1|\Delta u_t^\sS|+a_2|\Delta u_t^\sI|+a_3|\Delta u_t^\sR| +a_4|\Delta p_t^\sI|+a_5|\Delta v_t|+a_6|\Delta \lambda_t^{\sS}|+a_7|\Delta \lambda_t^{\sI}|\Big] \\
        &\leq (2a_1+a_2+\cdots+a_7)|\Delta u_t^\sS|^2+a_2|\Delta u_t^\sI|^2 +a_3|\Delta u_t^\sR|^2+a_4|\Delta p_t^\sI|^2+a_5|\Delta v_t|^2+a_6|\Delta \lambda_t^{\sS}|^2+a_7|\Delta \lambda_t^{\sI}|^2.
    \end{aligned}
\end{equation*}
Here
$a_1=2\beta + V$,
$a_2=2\beta$, $a_3=V$,
$a_4=4 \beta \bar{U}$,
$a_5=2\bar{U}$, $a_6=2\beta \bar U+2$,
$a_7=4 \beta \bar{U}$.
Similarly for the value function of state $\sI$ and $\sR$, there is
$
\frac{d}{dt}|\Delta u_t^\sI|^2\leq 3\gamma|\Delta u_t^\sI|^2+\gamma|\Delta u_t^\sR|^2$ and $\frac{d}{dt}|\Delta u_t^\sR|^2\le3\eta|\Delta u_t^\sR|^2+\eta|\Delta u_t^\sS|^2.
$
We combine to have
$$
\frac{d}{dt}\|\Delta u_t\|^2\le A_1\|\Delta u_t\|^2+A_2\|\Delta p_t\|^2+A_3\|\Delta \theta_t\|^2,
$$
where
$A_1:=\max\{2a_1+a_2+a_3+a_4+a_5+a_6+a_7+\eta,\,a_2+3\gamma,a_3+\gamma+3\eta\}$, $A_2:=a_4$ and $A_3 := \max\{a_5, a_6, a_7\}$.
Since $\Delta u_T=0$, we apply Gr\"onwall's inequality to have
\begin{equation*}
        \|\Delta u_t\|^2
        \leq\int_t^T e^{A_1(t-s)}\Big[A_2\|\Delta p_s\|^2+A_3\|\Delta \theta_s\|^2\Big]ds.
\end{equation*}
Therefore, taking the supremum over $t\in[0,T]$ yields $\Delta U:=\sup_{t\in[0,T]}\|\Delta u_t\|^2
\leq e^{A_1T}T\big[A_2 \Delta P+A_3 \sup_t \|\Delta \theta_t\|^2\big]$, where $\Delta P:=\sup_{t\in[0,T]}\|\Delta p_t\|^2$. For the minor agent KFP equations, we similarly have, for some constants $b_1,\dots,b_6>0$ depending only on the uniform bounds,
\begin{equation*}
    \begin{aligned}
        \frac{d}{dt}|\Delta p_t^\sS|^2
        &=2\Delta p_t^\sS{\Delta \dot{p}}_t^\sS \\
        &\leq 2|\Delta p_t^\sS|
        \Big[
        \beta |\Delta \hat{\alpha}_t^\sS|
        +\beta |\Delta p_t^\sI|
        +\beta |\Delta \lambda_t^\sI|
        +\beta |\Delta p_t^\sS|
        +|\Delta v_t|
        +V |\Delta p_t^\sS|
        +\eta |\Delta p_t^\sR|
        \Big] \\
        &\leq 2|\Delta p_t^\sS|
\Big[
b_1 |\Delta u_t^\sS|
+b_2|\Delta u_t^\sI|
+(b_3+V)|\Delta p_t^\sS|
+b_4|\Delta p_t^\sI|+\eta |\Delta p_t^\sR|
+b_5|\Delta \lambda_t^{\sS}|
+b_6|\Delta \lambda_t^{\sI}|
+|\Delta v_{t}|
\Big] \\
& \le
(b_1+\cdots+b_6+b_3+V+\eta+1)|\Delta p_t^\sS|^2+b_1|\Delta u_t^\sS|^2
+b_2|\Delta u_t^\sI|^2
+b_4|\Delta p_t^\sI|^2
+\eta|\Delta p_t^\sR|^2
+b_5|\Delta \lambda_t^{\sS}|^2
+b_6|\Delta \lambda_t^{\sI}|^2
+|\Delta v_t|^2.
    \end{aligned}
\end{equation*}
For state $\sI$ and $\sR$, we similarly have
$$
\frac{d}{dt}|\Delta p_t^\sI|^2\leq 
\big(b_1+b_2+b_3+2b_4+2\gamma+b_5+b_6\big)|\Delta p_t^\sI|^2 + b_1|\Delta u_t^\sS|^2 + b_2|\Delta u_t^\sI|^2 + b_3|\Delta p_t^\sS|^2 + b_5|\Delta \lambda_t^{\sS}|^2 + b_6|\Delta \lambda_t^{\sI}|^2,
$$
and
$\frac{d}{dt}|\Delta p_t^\sR|^2
\leq V|\Delta p_t^\sS|^2 + \gamma|\Delta p_t^\sI|^2 +(1 + V + \gamma + 2\eta)|\Delta p_t^\sR|^2 + |\Delta v_t|^2.$
Combining the three estimates yields
$$
\frac{d}{dt}\|\Delta p_t\|^2
\leq B_1\|\Delta p_t\|^2+B_2\|\Delta u_t\|^2+B_3 \|\Delta \theta_t\|^2,
$$
where $B_1:=\max\{b_1 +\cdots+b_6+2b_3+2V+\eta+1,b_1 +\cdots+b_6+2b_4+3\gamma,1+V+\gamma+3\eta\}$,
$B_2:=\max\{2b_1,2b_2\}$ and $B_3 = \max \{2b_5,2b_6,2\}$. Since $\Delta p_0=0$, we again use Gr\"onwall's inequality to have
$$
\|\Delta p_t\|^2
\le \int_0^t e^{B_1(t-s)}
\big[
B_2\|\Delta u_s\|^2+B_3 \|\Delta \theta_s\|^2
\big]ds.
$$
Taking the supremum over $t\in[0,T]$ gives
$
\Delta P
\leq e^{B_1T}T\big[B_2 \Delta U+B_3 \sup_t \|\Delta \theta_t\|^2\big]
$. Plug in the backward estimate gives
$$\Delta P \leq e^{(A_1+B_1)T}A_2 B_2 T^2 \Delta P + [e^{(A_1+B_1)T}A_3 B_2 T^2 +e^{B_1 T} B_3 T]\sup_t \|\Delta \theta_t\|^2$$
For short enough time horizon such that $e^{(A_1+B_1)T}A_2 B_2 T^2 < 1$,
then we have
$$
\Delta P
\leq
\frac{e^{(A_1+B_1)T}A_3 B_2 T^2 +e^{B_1 T} B_3 T}
{1-e^{(A_1+B_1)T}A_2 B_2 T^2}  \sup_t \|\Delta \theta_t\|^2.
$$
A similar but symmetric procedure holds for $\Delta U$, we therefore have $\|\Delta p\|_T \leq C_p \|\Delta \theta\|_T$ and $\|\Delta u\|_T \leq C_u \|\Delta \theta\|_T$ for some $C_p > 0$ and $C_u > 0$.

\textbf{Step 2. Estimation on $h_2$.} The estimate on budget process and its adjoint variable is not heavy, observing that the right-hand side of its dynamics is a product sum of variables. We follow similar path as step 1 and obtain
$$\sup_t \|\Delta B_t\|^2 \leq e^{C_1 T}C_2 T \sup_t\|\Delta p_t\|^2 + e^{C_1 T}C_3 T\sup_t \|\Delta \theta_t\|^2,$$
where $C_1 := 7m^e + m^v V+m^v$, $C_2 := \max\{2m^\sS+m^vV,2m^\sI,2m^\sR\}$ and $C_3:=\max\{m^\sS, m^\sI, m^\sR,m^v\}$. And trivially there is $\sup_t \|\Delta \chi_t\|^2 \leq |2 K^b \Delta B_T|^2\leq 4(K^b)^2 \sup_t \|\Delta B_t\|^2$. We have $\|\Delta B\|_T \leq C_b \|\Delta \theta\|_T$ and $\|\Delta \chi\|_T \leq C_\chi \|\Delta \theta\|_T$.

\textbf{Step 3. Estimation on $h_3$.}
Since the budget state process $B$ does not enter the adjoint system explicitly, it suffices to estimate the dependence of $(\phi,\psi)$ on $(u,p,\chi,\theta)$. By Lemma~\ref{lemma:h_convexity}, there exist constants $\bar \Phi,\bar \Psi > 0$ such that $|\phi_t^e| \le \bar \Phi$ and $|\psi_t^e| \le \bar \Psi$ for all $e \in \{\sS,\sI,\sR\}$ and $t \in [0,T]$.
For the backward adjoint equations, we have the estimation
$$
\begin{aligned}
\frac{d}{dt} |\Delta \varphi_t^{\sS}|^2
&= 2\Delta \varphi_t^{\sS}\Delta \dot \varphi_t^{\sS} \\ & \leq 2|\Delta \varphi_t^{\sS}|
\Bigl[
d_1 |\Delta \varphi_t^{\sS}| + d_2 |\Delta \varphi_t^{\sI}| + d_3 |\Delta \varphi_t^{\sR}| + d_4 |\Delta p_t^\sI| +d_5 |\Delta u_t^\sS| + d_6 |\Delta u_t^\sI| + d_7 |\Delta \chi_t| + d_8 |\Delta \lambda_t^\sS| + d_9 |\Delta \lambda_t^\sI| + d_{10} |\Delta v_t|
\Bigr] \\
&\le (2d_1+d_2+\cdots+d_{10})|\Delta \varphi_t^{\sS}|^2 + d_2|\Delta \varphi_t^{\sI}|^2 + d_3|\Delta \varphi_t^{\sR}|^2 + d_4\|\Delta p_t\|^2 + \tilde{d}_5\|\Delta u_t\|^2 + \tilde{d}_6\|\Delta \theta_t\|^2 + d_7|\Delta \chi_t|^2,
\end{aligned}$$
where $d_1 := 2\beta + V$, $d_2 := 2\beta$, $d_3 := V$, $d_4 := 2\beta^2 \bar{\varphi} \bar{U}+2\beta \bar{\varphi}$, $d_5 := \frac{1}{2}\beta$, $d_6 := \frac{1}{2}\beta$, $d_7 := 2m^\sS + m^v V$, $d_8 := 2\beta \bar{\varphi} + m^\sS \bar{\chi}$, $d_9 := 2\beta^2 \bar{U} \bar{\varphi} + 2 \beta \bar{\varphi}$, $d_{10} := 2\bar{\varphi} + m^v \bar{\chi}$; and $\tilde{d}_5:=\max\{d_5,d_6\}$, $\tilde{d}_6:=\max\{d_8,d_9,d_{10}\}$.
$$
\begin{aligned}
\frac{d}{dt} |\Delta \varphi_t^{\sI}|^2
&= 2\Delta \varphi_t^{\sI}\Delta \dot \varphi_t^{\sI} \\
&\leq 2|\Delta \varphi_t^{\sI}|
\Bigl[
d_{11} |\Delta \varphi_t^{\sS}| + d_{12} |\Delta \varphi_t^{\sI}| + d_{13} |\Delta \varphi_t^{\sR}| + d_{14} |\Delta \psi_t^{\sS}| + d_{15}|\Delta p_t^\sS| + d_{16}|\Delta p_t^\sI| \\ &\quad\quad  +d_{17}|\Delta u_t^\sS| + d_{18}|\Delta u_t^\sI| + d_{19}|\Delta \chi_t| + d_{20}|\Delta \lambda_t^\sS| + d_{21}|\Delta \lambda_t^\sI|
\Bigr] \\
&\leq d_{12}|\Delta \varphi_t^{\sS}|^2 + (d_{11}+2d_{12}+\cdots+d_{21})|\Delta \varphi_t^{\sI}|^2 + d_{13}|\Delta \varphi_t^{\sR}|^2 + d_{14}|\Delta \psi_t^{\sS}|^2 + \tilde{d}_{15}\|\Delta p_t\|^2 \\ & \quad \quad + \tilde{d}_{16}\|\Delta u_t\|^2 + d_{19}|\Delta \chi_t|^2 + \tilde{d}_{17}\|\Delta \theta_t\|^2,
\end{aligned}
$$
where $d_{11}:=\beta + 2\beta^2\bar{U}$, $d_{12}:=\beta + 2\beta^2\bar{U}+\gamma$, $d_{13} := \gamma$, $d_{14}:=2\beta\bar{U}+2\beta^2\bar{U}^2$, $d_{15}:=2\beta\bar{\varphi}+4\beta^2 \bar{\varphi}^2 \bar{U}^2$, $d_{16}:=2K^p + 4\beta^2\bar{\varphi}^2\bar{U}^2+2\beta^2\bar{U}^2\bar{\psi}$, $d_{17}:=2\beta^2\bar{\varphi}^2+\beta^2\bar{\psi}\bar{U}+\beta \bar{\psi}$, $d_{18}:=2\beta^2\bar{\varphi}^2+\beta^2\bar{\varphi}\bar{U}+\beta \bar{\psi}$, $d_{19}:=2m^\sI$, $d_{20}:=2\beta \bar{\varphi}+2\beta \bar{U}\bar{\psi}$, $d_{21}:=2 \beta \bar{\varphi}+8 \beta^2 \bar{\varphi}^2 \bar{U}^2+2 \beta \bar{U} \bar{\psi}+4 \beta^2 \bar{U}^2 \bar{\psi}+m^\sI \bar{\chi}$; and $\tilde{d}_{15}:=\max\{d_{15}, d_{16}\}$, $\tilde{d}_{16}:=\max\{d_{17}, d_{18}\}$, $\tilde{d}_{17}:=\max\{d_{20}, d_{21}\}$.
$$
\begin{aligned}
\frac{d}{dt} |\Delta \varphi_t^{\sR}|^2
= 2\Delta \varphi_t^{\sR}\Delta \dot \varphi_t^{\sR} &\leq 2|\Delta \varphi_t^{\sR}|
\big[
\eta |\Delta \phi_t^{\sS}| + \eta |\Delta \phi_t^{\sR}| + 2 m^\sR|\Delta \chi_t| + m^\sR \bar{\chi}\ |\Delta \lambda_t^\sR|
\big] \\
&\le \eta |\Delta \varphi_t^{\sS}|^2 + (3\eta + 2 m^\sR + m^\sR \bar{\chi})|\Delta \varphi_t^{\sR}|^2 + 2 m^\sR|\Delta \chi_t|^2 + m^\sR \bar{\chi}\ |\Delta \lambda_t^\sR|^2.
\end{aligned}
$$
Combining the three estimates yields
$$
\frac{d}{dt}\|\Delta \varphi_t\|_2^2 \leq D_1\|\Delta \varphi_t\|^2 + D_2\|\Delta \psi_t\|^2 + D_3\|\Delta p_t\|^2 + D_4\|\Delta u_t\|^2 + D_5|\Delta \chi_t|^2 + D_6\|\Delta \theta_t\|^2,
$$
where $D_1:=\max\big\{
  2d_1 + d_2 + \cdots + d_{10},
  d_{11} + 2d_{12} + \cdots + d_{21},
  3\eta + 2m^{\sR} + m^{\sR}\bar{\chi}\big\}$, $D_2 := d_{14}$, $D_3:= d_4+\tilde{d}_{15}$, $D_4 := \tilde{d}_5+ \tilde{d}_{16}$, $D_5:=\max\{ d_7,d_{19},2m^{\sR}\}$, $D_6 := \max\{ \tilde{d}_6,\tilde{d}_{17},m^{\sR}\bar{\chi}\}$.
Since $\Delta \varphi_T = 0$, Gr\"onwall's inequality gives
$$
\|\Delta \varphi_t\|^2 \le \int_t^T e^{D_1(t-s)}
\Bigl[
D_2\|\Delta \psi_s\|^2 + D_3\|\Delta p_s\|^2 + D_4\|\Delta u_s\|^2 + D_5|\Delta \chi_s|^2 + D_6\|\Delta \theta_s\|^2
\Bigr] ds.
$$
Hence, taking the supremum over $t \in [0,T]$ gives $\Delta \Phi \le e^{D_1T}T\big[D_2\Delta \Psi + D_3\Delta P + D_4\Delta U + D_5\Delta \chi + D_6 \sup_{t\in[0,T]}\|\Delta \theta_t\|^2 \big]$. Here $\Delta \Phi := \sup_{t\in[0,T]}\|\Delta \varphi_t\|^2$, $\Delta \Psi := \sup_{t\in[0,T]}\|\Delta \psi_t\|^2$ and $\Delta \chi := |\Delta \chi|^2$.
For the forward adjoint equations, we similarly have estimation
$$
\begin{aligned}
\frac{d}{dt} |\Delta \psi_t^{\sS}|^2
&= 2\Delta \psi_t^{\sS}\Delta \dot \psi_t^{\sS} \\
&\le 2|\Delta \psi_t^{\sS}|
\Bigl[
e_1|\Delta \psi_t^{\sS}| + e_2|\Delta \psi_t^{\sR}| + e_3|\Delta \varphi_t^{\sS}| + e_4|\Delta \varphi_t^{\sI}| + e_5|\Delta p_t^\sS| + e_6|\Delta p_t^\sI| \\ & \quad \quad + e_7|\Delta u_t^\sS| + e_8|\Delta u_t^\sI| + e_9|\Delta \lambda^\sS_t| + e_{10}|\Delta \lambda^\sI_t| + e_{11}|\Delta v_t|
\Bigr] \\
&\leq (2e_1+e_2+\cdots+e_{11})|\Delta \psi_t^{\sS}|^2 + e_2|\Delta \psi_t^{\sR}|^2 + e_3|\Delta \varphi_t^{\sS}|^2 + e_4|\Delta \varphi_t^{\sI}|^2 + \tilde{e}_5\|\Delta p_t\|^2 + \tilde{e}_6\|\Delta u_t\|^2 + \tilde{e}_7\|\Delta \theta_t\|^2,
\end{aligned}
$$
where $e_1:=\beta + \beta^2\bar{U} + V$, $e_2 := \eta$, $e_3:=\frac{1}{2}\beta^2$, $e_4:=\frac{1}{2}\beta^2$, $e_5:=\beta^2 \bar{\varphi}$, $e_6:=2\beta^2\bar{\varphi}+2\beta^2\bar{\psi}\bar{U}+\beta \bar{\psi}$, $e_7:=\frac{1}{2}\beta^2 \bar{\psi}$, $e_8:=\frac{1}{2}\beta^2 \bar{\psi}$, $e_9:=\beta\bar{\psi}$, $e_{10}:=2\beta^2\bar{\varphi}+2\beta^2\bar{\psi}\bar{U}+\beta \bar{\psi}$, $e_{11}:=\bar{\psi}$; and $\tilde{e}_{5}:=\max\{e_5,e_6\}$, 
$\tilde{e}_{6}:=\max\{e_7,e_8\}$, $\tilde{e}_{7}:=\max\{e_9,e_{10},e_{11}\}$. We have for state $\sI$
$$
\begin{aligned}
\frac{d}{dt} |\Delta \psi_t^{\sI}|^2
&= 2\Delta \psi_t^{\sI}\Delta \dot \psi_t^{\sI} \\
&\leq 2|\Delta \psi_t^{\sI}|
\Bigl[
e_{12}|\Delta \psi_t^{\sS}| + e_{13}|\Delta \psi_t^{\sI}| + e_{14}|\Delta \varphi_t^{\sS}| + e_{15}|\Delta \varphi_t^{\sI}| + e_{16}|\Delta p_t^\sS| + e_{17}|\Delta p_t^\sI| \\ & \quad\quad + e_{18}|\Delta u_t^\sS| + e_{19}|\Delta u_t^\sI| + e_{20}|\Delta \lambda_t^\sS|+ e_{21}|\Delta \lambda_t^\sI|
\Bigr] \\
&\leq e_{12}|\Delta \psi_t^{\sS}|^2 + (e_{12}+2e_{13}+\cdots+e_{21})|\Delta \psi_t^{\sI}|^2 + e_{14}|\Delta \varphi_t^{\sS}|^2 + e_{15}|\Delta \varphi_t^{\sI}|^2 + \tilde{e}_{16}\|\Delta p_t\|^2 + \tilde{e}_{17}\|\Delta u_t\|^2 + \tilde{e}_{18}\|\Delta \theta_t\|^2,
\end{aligned}
$$
where $e_{12}:=\beta+\beta^2 \bar{U}$, $e_{13}:=\gamma$, $e_{14}:=\frac{1}{2}\beta^2$, $e_{15}:=\frac{1}{2}\beta^2$, $e_{16}:=\beta^2 \bar{\varphi}$, $e_{17}:=2\beta^2\bar{\varphi}+2\beta^2\bar{\psi}\bar{U}+\beta \bar{\psi}$, $e_{18}:=\frac{1}{2}\beta^2 \bar{\psi}$, $e_{19}:=\frac{1}{2}\beta^2 \bar{\psi}$, $e_{20}:=\beta \bar{\psi}$, $e_{21}:=2\beta^2\bar{\varphi}+2\beta^2\bar{\psi}\bar{U}+\beta \bar{\psi}$. We have for state $\sR$
$$
\begin{aligned}
\frac{d}{dt} |\Delta \psi_t^{\sR}|^2
= 2\Delta \psi_t^{\sR}\Delta \dot \psi_t^{\sR} & \leq 2|\Delta \psi_t^{\sR}|
\big[
V|\Delta \psi_t^{\sS}| + \gamma|\Delta \psi_t^{\sI}| + \eta|\Delta \psi_t^{\sR}| + \bar{\psi}|\Delta v_t|
\big] \\
&\leq V|\Delta \psi_t^{\sS}|^2 + \gamma|\Delta \psi_t^{\sI}|^2 + (V+\gamma+2\eta+\bar{\psi})|\Delta \psi_t^{\sR}|^2 + \bar{\psi}|\Delta v_t|^2.
\end{aligned}
$$
Combining the three estimates yields
$$
\frac{d}{dt}\|\Delta \psi_t\|^2 \le E_1\|\Delta \psi_t\|^2 + E_2\|\Delta \varphi_t\|^2 + E_3\|\Delta p_t\|^2 + E_4\|\Delta u_t\|^2 + E_5\|\Delta \theta_t\|^2,
$$
where $E_1:=\max\{
  2e_1 + e_2 + \cdots + e_{11},
  e_{12} + 2e_{13} + \cdots + e_{21},
  V + \gamma + 2\eta + \bar{\psi}\}$, $E_2:=\max\{e_3+e_{14}, e_4+e_{15}\}$, $E_3:=\max\{\tilde{e}_5,\tilde{e}_{16}\}$, $E_4:=\max\{\tilde{e}_6,\tilde{e}_{17}\}$, $E_5:=\max\{\tilde{e}_7, \tilde{e}_{18},\bar{\psi}\}$. Since $\Delta \psi_0 = 0$, Gr\"onwall's inequality gives
$$\|\Delta \psi_t\|_2^2 \leq \int_0^t e^{E_1(t-s)}
\big[
E_2\|\Delta \varphi_s\|^2 + E_3\|\Delta p_s\|^2 + E_4\|\Delta u_s\|^2 + E_5\|\Delta \theta_s\|^2
\big] ds.$$
Therefore, $\Delta \Psi \le e^{E_1T}T\big[E_2\Delta \Phi + E_3\Delta P + E_4\Delta U + E_5\sup_{t\in[0,T]}\|\Delta \theta_t\|^2\big]$. Plugging the above estimate into the bound for $\Delta \Phi$ yields
\begin{equation*}
    \begin{aligned}
        \Delta \Phi & \leq e^{(D_1+E_1)T}D_2E_2T^2\Delta \Phi + [e^{(D_1+E_1)T}D_2E_3T^2 + e^{D_1T}D_3T]\Delta P \\ & \quad \quad + [e^{(D_1+E_1)T}D_2E_4T^2 + e^{D_1T}D_4T]\Delta U + e^{D_1T}D_5T\Delta \chi + [e^{(D_1+E_1)T}D_2E_5T^2 + e^{D_1T}D_6T]\sup_{t\in[0,T]}\|\Delta \theta_t\|^2
    \end{aligned}
\end{equation*}
For short enough time horizon such that $e^{(D_1+E_1)T}D_2E_2T^2 < 1$,
we obtain $\|\Delta \varphi\|_T \leq C_\varphi \|\Delta \theta\|_T$
for some constant $C_{\varphi} > 0$. Substituting this back into the estimate for $\Delta \Psi$ gives
$\|\Delta \psi\|_T \leq C_\psi \|\Delta \theta
\|_T$ for some constant $C_\psi > 0$. Therefore, $h_3$ is Lipschitz continuous.

\textbf{Step 4. Estimation on $h_4$.}
Assume the short-time condition in Lemma~\ref{lemma:h_convexity} holds and adopt its notation. For $i=1,2$, we denote
\small
$$
M_t^i:=
\begin{pmatrix}
K^\sS & q_t^i\\
q_t^i & n_t^i
\end{pmatrix},
\qquad
r_t^i:=
\begin{pmatrix}
K^\sS\bar\lambda^\sS-m^\sS p_t^{i,\sS}\chi_t^i\\
K^\sI\bar\lambda^\sI-m^\sI p_t^{i,\sI}\chi_t^i
\end{pmatrix}.
$$\normalsize
Then $\tilde{\lambda}_t^{i,\sS}$ and $\tilde{\lambda}_t^{i,\sI}$ in \eqref{eq:smfe_control} can be written as $(M_t^i)^{-1}r_t^i$. By Lemma~\ref{lemma:h_convexity}, under small enough time horizon $\det(M_t^i)\geq \frac{1}{2}K^\sS K^\sI$ for all $t\in[0,T]$ and $i=1,2$. Since the entries of $M_t^i$ are uniformly bounded, there exists a constant $\bar C_M>0$ such that $\|(M_t^i)^{-1}\|\leq \bar{C}_M, \forall t\in[0,T], i=1,2$. Using the uniform bounds on $(p,u,\phi,\psi,\chi)$, there exist constants
$\bar C_q,\bar C_n,\bar C_r>0$ such that for all $t\in[0,T]$,
\begin{equation*}
    \begin{aligned}
        |q_t^1-q_t^2|
&\leq \bar C_q\big(\|\Delta p_t\|_2+\|\Delta u_t\|_2+\|\Delta\phi_t\|_2+\|\Delta\psi_t\|_2\big), \\ |n_t^1-n_t^2|
&\leq \bar C_n\big(\|\Delta p_t\|_2+\|\Delta u_t\|_2+\|\Delta\phi_t\|_2+\|\Delta\psi_t\|_2\big), \\
\|r_t^1-r_t^2\|_2
&\leq \bar C_r\big(\|\Delta p_t\|_2+|\Delta\chi_t|\big).
    \end{aligned}
\end{equation*}

Using the identity $A^{-1}-B^{-1}=A^{-1}(B-A)B^{-1}$, we obtain
$$
\begin{aligned}
\|\Delta\tilde{\Lambda}_t\|_2
&:=\|(M_t^1)^{-1}r_t^1-(M_t^2)^{-1}r_t^2\|_2\\
&\leq \|(M_t^1)^{-1}\|\,\|r_t^1-r_t^2\|_2
+\|(M_t^1)^{-1}-(M_t^2)^{-1}\|\,\|r_t^2\|_2\\
&\leq C_{\Lambda}\big(\|\Delta p_t\|_2+\|\Delta u_t\|_2+\|\Delta\phi_t\|_2+\|\Delta\psi_t\|_2+|\Delta\chi_t|\big)
\end{aligned}
$$
for some constant $C_{\Lambda}>0$. For $\lambda^\sR$, we directly have
$$|\Delta\tilde\lambda_t^R|
\leq \frac{m^\sR}{K^\sR}(\bar{\chi}|\Delta p_t^\sR|+|\Delta\chi_t|)
\leq C_R(\|\Delta p_t\|_2+|\Delta\chi_t|).$$
For $v$, by the uniform boundedness of $(p,u,\phi,\psi,\chi)$,
$$|\Delta\tilde v_t|
\le C_v(\|\Delta p_t\|_2+\|\Delta u_t\|_2+\|\Delta\phi_t\|_2+\|\Delta\psi_t\|_2+|\Delta\chi_t|)$$
for some constant $C_v>0$. Combining the above estimates, there exists a constant $C_4>0$ such that
$$\|\Delta\tilde\theta_t\|_2
\le C_4\big(\|\Delta p_t\|_2+\|\Delta u_t\|_2+\|\Delta\phi_t\|_2+\|\Delta\psi_t\|_2+|\Delta\chi_t|\big),
\quad \forall t\in[0,T].$$
Taking supremum over $t\in[0,T]$ and using the estimates from Steps 1--3, we obtain
$$\|f(\theta_1)-f(\theta_2)\|_T
=\|\Delta\tilde\theta\|_T
\le C_4\big(C_p+C_u+C_\phi+C_\psi+C_\chi\big)\|\theta_1-\theta_2\|_T.$$
We may choose $T>0$ sufficiently small such that $C:=C_4\big(C_p+C_u+C_\phi+C_\psi+C_\chi\big)<1$, then $f$ is a contraction on $\mathcal{M}$. Banach fixed point theorem implies that there exists a unique fixed point $\theta^*\in \mathcal{M}$. The corresponding trajectories $(u,p,B,\phi,\psi,\chi)$ obtained through Steps 1-3 then provide the unique bounded continuous solution to the FBODE system \eqref{eq:smfe_fbode}. This completes the proof.

\end{document}